\numberwithin{equation}{subsection}
\newcommand\drawtbx[3][]{%
	\begin{scope}[shift={#3}]
    \tableauRow=0
    \foreach \Row in {#2} {
       \tableauCol=1
       \foreach\k in \Row {
          \draw[#1](\the\tableauCol,\the\tableauRow)+(-.5,-.5)rectangle++(.5,.5);
          \draw(\the\tableauCol,\the\tableauRow)node{\k};
          \global\advance\tableauCol by 1
       }
       \global\advance\tableauRow by -1
    }
	\end{scope}
}
\newtheorem{theorem}{Theorem}[section]
\newtheorem{proposition}[theorem]{Proposition}
\newtheorem{lemma}[theorem]{Lemma}
\newtheorem*{proposition*}{Proposition}
\newtheorem*{theorem*}{Theorem}
\newtheorem*{conjecture*}{Conjecture}
\theoremstyle{definition}
\newtheorem{remark}{Remark}
\newtheorem{definition}[theorem]{Definition}
\newtheorem{example}[theorem]{Example}
\newtheorem{def-prop}[theorem]{Definition-Proposition}
\DeclareMathAlphabet{\mathpzc}{OT1}{pzc}{m}{it}
\DeclareMathOperator{\F}{\mathbb{F}}
\DeclareMathOperator{\Z}{\mathbb{Z}}
\newcommand{\interval}[1]{\llbracket #1 \rrbracket}
\DeclareMathOperator{\Ker}{\mathrm{Ker}}
\DeclareMathOperator{\Image}{\mathrm{Im}}
\DeclareMathOperator{\sgn}{\mathrm{sgn}}
\newcommand{\bfe}{\pmb{e}}
\newcommand{\tbx}{\mathtt{t}}
\newcommand{\aST}{\mathsf{aST}}
\newcommand{\ST}{\mathsf{ST}}
\newcommand{\pbad}{\ST^{\mathrlap{\backslash}p}}
\mathchardef\hyp="2D
\newcommand{\Fspan}{\mathbb{F}\hyp\mathrm{span}}
\renewenvironment{proof}[1][\proofname]{\par
	\pushQED{\qed}%
	\normalfont \topsep0\p@\@plus6\p@ \labelsep1em\relax
	\trivlist
	\item[\hskip\labelsep\bfseries #1]\ignorespaces
	\mbox{}
}{
	\popQED\endtrivlist\@endpefalse
}
\newcommand{\cusitem}[1]{%
\item[#1]\protected@edef\@currentlabel{#1}%
}
\numberwithin{equation}{subsection}
\begin{document}

\title[Irreducible representations of the symmetric groups from slash homologies]{Irreducible representations of the symmetric groups from slash homologies of $p$-complexes}
\date{\today}

\subjclass[2010]{Primary 20C30}

\keywords{Modular representation, symmetric group, permutation module, $p$-complex, slash cohomology, $p$-standard tableau}

\author{Aaron Chan}
\address{Graduate School of Mathematics, Nagoya University, Furocho, Chikusaku, Nagoya, JAPAN}
\email{aaron.kychan@gmail.com}

\author{William Wong}
\address{Graduate School of Mathematics, Nagoya University, Furocho, Chikusaku, Nagoya, JAPAN}
\email{william.wong.hy@cantab.net}

\begin{abstract}
In the 40s, Mayer introduced a construction of (simplicial) $p$-complex by using the unsigned boundary map and taking coefficients of chains modulo $p$.
We look at such a $p$-complex associated to an $(n-1)$-simplex; in which case, this is also a $p$-complex of representations of the symmetric group of rank $n$ - specifically, of permutation modules associated to two-row compositions.
In this article, we calculate the so-called slash homology - a homology theory introduced by Khovanov and Qi - of such a $p$-complex.
We show that every non-trivial slash homology group appears as an irreducible representation associated to two-row partitions, and how this calculation leads to a basis of these irreducible representations given by the so-called $p$-standard tableaux.
\end{abstract}

\maketitle

\section{Introduction}
The modern notion of $p$-complexes first appear in the works of Mayer \cite{Mayer1, Mayer2} back in the 40s.
A $p$-complex is a sequence of groups $(C_k)_{k\in \Z}$ equipped with maps, called \emph{$p$-differentials}, $(\partial_k:C_k\to C_{k-1})_{k\in \Z}$ such that any composition of $p$ consecutive such maps is zero.
A simple example is to take $C_k=\mathbb{Z}$ for all $0\leq k< p$, $\partial_k=\mathrm{id}$ for all $0<k< p$, and $C_k=0=\partial_k$ for all other $k$'s; this can be shown more visually in the form
\[
\xymatrix@C=30pt@R=0pt{
**[l]{\big( C_{p-1}} \ar[r]^{\partial_{p-1}} & C_{p-2} \ar[r]^{\partial_{p-2}} & \cdots \ar[r]^{\partial_2} & C_1 \ar[r]^{\partial_1} & C_0\big) \\
**[l]{=\big(\quad\; \Z} \ar[r]^{\mathrm{id}} & \Z \ar[r]^{\mathrm{id}}& \cdots \ar[r]^{\mathrm{id}}&\Z \ar[r]^{\mathrm{id}}& \Z\;\;\big)
}
\]

In \cite{Mayer1}, Mayer defined a notion of homology groups on a $p$-complex $C_\bullet=(C_k, \partial_k)_k$, given by ${}^aH_k:=\Ker(\partial^a)/\Image(\partial^{p-a})$ where $a\in\{1,2,\ldots, p-1\}$.
Moreover, the $p$-differential $\partial_k$ naturally induces a map  ${}^aH_k \to {}^{a-1}H_{k-1}$ for all $a$, which gives the total homology $\bigoplus_{k,a}{}^aH_k$ the structure of a $p$-complex.

It turns out that Mayer's homology theory is not quite optimal from the perspective of constructing derived category for $p$-complexes.   For example, over any prime $p>2$, the Mayer homology $\bigoplus_{k,a} {}^aH_k(C_\bullet)$ of the $p$-complex $C_\bullet:=(\Z\xrightarrow{\mathrm{id}}\Z)$ is $(C_\bullet)^{\oplus p-2}$ - this is rather redundant in the cases when $p>2$.  There are four (equivalent) ways to obtain an optimal homology of a $p$-complex, we will focus on the following two considered by Khovanov and Qi in \cite{KhQi}, which come from Khovanov's idea of categorifying quantum groups at primitive $p$-root of unity; see \cite{Mir} for the other two.

Similar to Mayer's version, at each degree $k\in \Z$, there are $p-1$ \emph{slash homology groups} and \emph{backslash homology groups}, which are subquotients of $C_k$ given respectively by
\begin{align*}
H_k^{/a} := \Ker(\partial^{a+1})/(\Image(\partial^{p-a-1})+\Ker(\partial^a))
\;\; \text{and}\;\; H_k^{\backslash a}:= (\Image(\partial^a)\cap \Ker(\partial^{p-1-a}))/\Image(\partial^{a+1}),
\end{align*}
with $a$ ranging in $\{0,1,\ldots, p-2\}$.
As in Mayer's version, the $\partial_k$'s also naturally induce a structure of $p$-complex on the (total) homology $H_\bullet := \big(\bigoplus_{k,a} H_k^{\slash a}, \overline{\partial}^{\slash}\big) = \big(\bigoplus_{k,a}H_k^{\backslash a}, \overline{\partial}^{\backslash}\big)$; see Section \ref{sec:pcpx} for details.

This article studies the slash homology of a $p$-complex associated to an $(n-1)$-simplex.
Let us be more precise about the construction of this $p$-complex.
Fix a field $\F$ of prime characteristic $p>0$ and an integer $n$.
For each $k\in\{0, 1, \ldots, n\}$, let $\Omega_k$ be the set of $k$-subsets, i.e. subsets of $\{1, 2, \ldots, n\}$ of size $k$, and $\F\Omega_k$ be the $\F$-vector space with basis $\Omega_k$.
Let $\varphi_k:\F\Omega_k\to \F\Omega_{k-1}$ be the $\F$-linear map that sends a $k$-subset $\omega$ to the formal sum of all $(k-1)$-subsets contained in $\omega$, i.e.
\[
\varphi_k:\omega \mapsto \sum_{\substack{\omega' \subset \omega \\ |\omega'|=k-1}} \omega'.
\]
It is an easy exercise to check that $\varphi_k\varphi_{k-1}\cdots \varphi_{k-(p-1)}=0$ for any $k$ (or simply $\varphi^p=0$ by omitting subscripts).
Hence, we have a $p$-complex of the form
\[
\F\Omega_\bullet:= ( \F\Omega_n \xrightarrow{\varphi_n} \F\Omega_{n-1} \xrightarrow{\varphi_{n-1}} \cdots \xrightarrow{\varphi_2}\F\Omega_1 \xrightarrow{\varphi_1}\F\Omega_0 ).
\]
When $p=2$, $\F\Omega_\bullet$ is precisely the canonical simplicial chain complex associated to an $(n-1)$-simplex, and it has trivial homology.

In fact, $\F\Omega_\bullet$ is not just a $p$-complex of $\F$-vector spaces, it is also a $p$-complex of $\mathfrak{S}_n$-representations, where $\mathfrak{S}_n$ is the symmetric group of rank $n$.
Indeed, $\F\Omega_k$ is isomorphic to the module induced from the trivial $\F(\mathfrak{S}_{n-k}\times \mathfrak{S}_k)$-module, i.e. the permutation module $M^{(n-k,k)}$ of the two-part composition $(n-k,k)$ of $n$, and $\varphi_k$'s are also $\F\mathfrak{S}_n$-linear.

Our first main result is the complete description of the slash homology of $\F\Omega_\bullet$.  
\begin{theorem*}{\rm (Theorem \ref{thm:newfinal})}
For any prime $p$ and any positive integer $n$, the total homology $p$-complex $H_\bullet := \big(\bigoplus_{k,a} H_k^{\slash a}, \overline{\partial}^{\slash}\big) = \big(\bigoplus_{k,a}H_k^{\backslash a}, \overline{\partial}^{\backslash}\big)$ is non-vanishing if, and only if, $p\neq 2$.  In such a case, it is given by
\begin{align*}
H_\bullet = & \bigoplus_{k:\,0\leq n-2k <p-1} \big( H_{n-k}^{/n-2k} \xrightarrow{\sim} H_{n-k-1}^{/n-2k-1} \xrightarrow{\sim} \cdots \xrightarrow{\sim} H_{k+1}^{/1} \xrightarrow{\sim} H_k^{/0} \big) \\
 =&  \bigoplus_{k:\,0\leq n-2k <p-1}\big( H_{n-k}^{\backslash 0} \xrightarrow{\sim} H_{n-k-1}^{\backslash 1} \xrightarrow{\sim} \cdots \xrightarrow{\sim} H_{k+1}^{\backslash n-2k-1} \xrightarrow{\sim} H_k^{\backslash n-2k} \big)\\
\cong & \bigoplus_{k:\,0\leq n-2k <p-1}\big( \underbrace{D^{(n-k,k)} \xrightarrow{\sim} D^{(n-k,k)} \xrightarrow{\sim} \cdots \xrightarrow{\sim} D^{(n-k,k)}\xrightarrow{\sim} D^{(n-k,k)}}_{n-2k+1 \text{ terms}}\big),
\end{align*}
where $D^{(n-k,k)}$ is the irreducible $\F\mathfrak{S}_n$-representation associated to the partition $(n-k,k)$.
\end{theorem*}

\smallskip 

For a partition $\lambda$ of $n$, it is well-known that the Specht module $S^\lambda$ appears a submodule of permutation module $M^\lambda$ with basis given by (the polytabloids associated to) standard $\lambda$-tableaux; see Section \ref{sec:tab} for details.  On the other hand, when $\lambda$ is so-called $p$-regular, $S^\lambda$ has a simple top $D^\lambda$ so that the multiplicity of $D^\lambda$ in $M^\lambda$ is precisely one.  Note that if $\lambda=(n-k,k)$ is a two-row partition, then it is always $p$-regular as long as $p\neq 2$.  

In view of the previous theorem, it is natural to wonder if one can write down a basis of $D^{(n-k,k)}$ in terms of tableaux combinatorics.  Note that it is already known in \cite{Kl} (and implicitly in \cite{Mat}) that when $n-2k\in\interval{0,p-2}$ and $p\neq 2$, the simple module $D^{(n-k,k)}$ has a basis indexed by the so-called $p$-standard tableaux of shape $(n-k,k)$ (see Definition \ref{def:p-std}), but no explicit description of any basis was ever given.  While investigating the homology of $\F\Omega_\bullet$, we see that such desired form of basis is indeed possible.

\begin{theorem*}{\rm (Theorem \ref{thm:basis})}
For $k$ satisfying $n-2k\in\interval{0,p-2}$, the simple $\F\mathfrak{S}_n$-module $D^{(n-k,k)}$ admits a basis
of the form\[
\big\{\,\bfe_{\tbx}+\mathrm{rad}S^{(n-k,k)}\, \big| \, \tbx\text{ is a $p$-standard tableau}  \big\},\]
where $\bfe_{\tbx}$ is the polytabloid basis element of the Specht module associated to a standard tableau $\tbx$.
\end{theorem*}

An interesting aspect in our proof is that we use a modification of the classical proof by Peel \cite{Pe}, who showed that every Specht module has a basis given by what James called \emph{standard polytabloid}, which are in bijection with standard tableaux (of the shape defining the Specht).  The key mechanism in Peel's proof is to use a certain partial order on the set of tableaux along with the so-called Garnir relations. Combining the combinatorics of $\varphi$ with Garnir relations allows us to see how a similar induction can be used to show the above result, if we consider a more refined partial order. It would be interesting to see if any similar modification would work for more general $D^\lambda$ when the notion of $p$-standard $\lambda$-tableaux makes sense.

\subsection*{Structure}
We present some details about $p$-complexes and the various homologies arising from them in Section \ref{sec:pcpx}.  Then in Section \ref{sec:newmain} we prove the first main theorem (Theorem \ref{thm:newfinal}).  In Section \ref{sec:tab} we review various results related to permutation modules and Specht modules, as well as their associated tableaux combinatorics in the case of two-row compositions.
The final Section \ref{sec:basis} is devoted to proving the second main result (Theorem \ref{thm:basis}).

\subsection*{Convention and notation}
As in \cite{Wil}, we fix throughout a field $\F$ of positive characteristic $p>0$.  By module we mean a finitely generated right module.
Maps (homomorphisms) are also applied to the right of a module.

An interval of integers will be denoted by $\interval{a,b}:=\{a,a+1,\ldots,b\}$ for $a<b\in\Z$; we also use $\interval{a}:=\interval{1,a}$ for convenience.
For a set $\mathcal{S}$, we denote by $|\mathcal{S}|$ its cardinality.
We use $\mathcal{S}\setminus \mathcal{T}$ to denote the complement of the subset $\mathcal{T}$ of $\mathcal{S}$.	

For a finite set $\Psi$, we denote by $\F\Psi$ the $\F$-vector space with basis $\Psi$.
If $\Psi$ is a finite subset of a module over some algebra, then we denote by $\Fspan\Psi$ the set of elements spanned by elements of $\Psi$ over $\F$ - we do not assume $\Psi$ is a basis of $\Fspan\Psi$ in this case, unless otherwise stated.

\section*{Acknowledgement}
Both authors were supported by JSPS International Research Fellowship.  AC is also supported by JSPS Grant-in-Aid for Research Activity Start-up program 19K23401.
This work grew out of Mark Wildon's talk at the Workshop on Representation Theory of Symmetric Groups and Related Algebras at the Institute of Mathematical Sciences, National University of Singapore, December 2017; we are thus deeply grateful for the organisers and the IMS.  

In addition, we are immensely grateful for Mark Wildon's interests, comments, and encouragement.  We also thank John Murray and Johannes Siemons for correspondence about the work \cite{BJS}.

\section{$p$-complexes and their homologies}\label{sec:pcpx}
Within this subsection, $\F$ is an arbitrary field, and $p\in \Z_{>1}$ be a natural number that is at least 2.
Suppose $A$ is an $\F$-algebra.  Consider $A$ as a graded algebra concentrated in degree $0$.  A \emph{$p$-complex of $A$-modules} is a tuple $C_\bullet = (\bigoplus_{k\in \Z} C_k, \partial)$ where $\bigoplus_{k\in\Z} C_k$ is a graded $A$-module and $\partial$ is a homogeneous endomorphism $\partial:\bigoplus_{k\in\Z} C_k\to \bigoplus_{k\in\Z} C_k$, called \emph{$p$-differential}, of degree $-1$ satisfying $\partial^p=0$.  For clarity, we use the notation $\partial_k$ to denote the restriction of $\partial$ to the degree $k$ component, i.e. \[
\partial_k:= \partial|_{C_k}: C_k\to C_{k-1}.
\]
Note that we use the \emph{chain} convention in this article instead of the usual cochain setup (like in \cite{KhQi}); hence, the exchange of subscripts and superscripts in the indices.

A natural $p$-complex version of homology groups of ordinary complexes is the following.
\begin{definition}
Let $C_\bullet$ be a $p$-complex of $A$-modules.
The $r$-th \emph{$p$-homology} at degree $k\in \Z$ of $C_\bullet$, where $r\in\interval{p-1}$, is defined as the  $A$-module
\[
{}^rH_k(C_\bullet) := \Ker(\partial_k^r) / \Image(\partial_{k+p-r}^{p-r}).
\]
\end{definition}

We are, however, more interested in the following homology groups.

\begin{definition}
Let $C_\bullet$ be a $p$-complex of $A$-modules.
The \emph{$a$-th slash homology group and backslash homology group at degree $k\in\Z$ of $C_\bullet$}, where $a\in\interval{0,p-2}$, are defined as the $A$-modules
\begin{align*}
H_k^{/a}(C_\bullet) &:= \Ker(\partial^{a+1}_k)/(\Image(\partial^{p-a-1}_{k+p-a-1})+\Ker(\partial^a_k)),\\
\text{and }\quad H_k^{\backslash a}(C_\bullet) &:= (\Image(\partial^a_{k+a})\cap \Ker(\partial^{p-a-1}_k))/\Image(\partial^{a+1}_{k+a+1}),
\end{align*}
respectively.
\end{definition}

It is clear that $H_k^{/0}(C_\bullet)={}^1H_k(C_\bullet)$ and $H_k^{\backslash 0}(C_\bullet)={}^{p-1}H_k(C_\bullet)$ for any $p$-complex $C_\bullet$.  In fact, as remarked in \cite{KhQi}, the natural inclusion $\iota_{k}^{(r-1)}:\Ker(\partial_k^{r-1})\to \Ker(\partial_k^r)$ for any $k\in\Z$ and $r\in \interval{p-1}$ induces an exact sequence
\begin{equation}\label{eq:4termseq}
0 \to H_k^{\backslash p-r}(C_\bullet) \to {}^{r-1}H_k(C_\bullet) \xrightarrow{\overline{\iota}_{k}^{(r-1)}} {}^rH_k(C_\bullet) \to H_k^{/r-1}(C_\bullet) \to 0,
\end{equation}
where ${}^\ell H_k(C_\bullet)$ is treated as for $\ell\in\{0,p\}$.

Observe that the restriction of the $p$-differential $\partial_k:\Ker(\partial_k^{r})\to \Ker(\partial_{k-1}^{r-1})$ induces $\overline{\partial}: {}^rH_k(C_\bullet)\to {}^{r-1}H_{k-1}(C_\bullet)$.
Then it is clear from the definition that the $\overline{\partial}$'s commute with $\overline{\iota}$'s, and give rise to the following commutative diagram 
\begin{equation}\label{eq:4termseq2}
\vcenter{\xymatrix{ 0 \ar[r]& 
H_k^{\backslash p-r} \ar[r]\ar[d]_{\overline{\partial}^{\backslash}}& 
{}^{r-1}H_k \ar[rr]^{\overline{\iota}_{k}^{(r-1)}} \ar[d]_{\overline{\partial}} &&
{}^rH_k \ar[r] \ar[d]_{\overline{\partial}} &
H_k^{/r-1} \ar[r]\ar[d]^{\overline{\partial}^{\slash}} & 0 \\ 
0 \ar[r]& H_{k-1}^{\backslash p-r+1} \ar[r]& {{}^{r-2}H_{k-1}}  \ar[rr]^{\overline{\iota}_{k-1}^{(r-2)}} &&{}^{r-2}H_{k-1} \ar[r]& H_{k-1}^{/r-2} \ar[r]& 0,
}}
\end{equation}
for all $r\in\interval{2, p-1}$, with both rows being exact.

\begin{definition}
The \emph{slash homology} and \emph{backslash homology} at degree $k$ are the $p$-complexes
\[
H_k^{\slash}(C_\bullet):=\left( \bigoplus_{a=0}^{p-2} H_{k+a}^{\slash a}(C_\bullet)\,,\; \overline{\partial}^{\slash}\right) \quad\text{and}\quad H_k^{\backslash}(C_\bullet):=\left( \bigoplus_{a=0}^{p-2} H_{k-a}^{\backslash a}(C_\bullet)\,,\; \overline{\partial}^{\backslash}\right) 
\]
respectively.  The \emph{total homology} $H_\bullet(C_\bullet)$ is the direct sum of all slash (or equivalently, backslash) homologies over all degrees.
\end{definition}
Note that $(\overline{\partial}^{\slash})^{p-1}=0=(\overline{\partial}^{\backslash})^{p-1}$ and $H_k^{\slash}(C_\bullet)$ is concentrated in degree $k, k+1, \ldots, k+p-2$, whereas $H_k^{\backslash}(C_\bullet)$ is concentrated in degree $k-p+2, k-p+3, \ldots, k-1, k$.  We remark also that $H_k^{\slash}(C_\bullet)$ is in general different from $H_{k+p-2}^{\backslash}(C_\bullet)$.

If we view the category of $p$-complexes of vectors spaces as the category of graded modules over $\F[\partial]/(\partial^p)$, then the $a$-th slash homology group $H_k^{/a}(C_\bullet)$ picks up (or mnemonically `slashes through'\footnote{This is not the original reason why they are called slash in \cite{KhQi}, but we find this mnemonic useful.}) the $a$-th radical layer of any indecomposable non-projective direct summand of $C_\bullet$ occurring at degree $k$, whereas the $a$-th backslash homology group picks up the $a$-th socle layer (i.e. slashes through the $a$-th layer from the back). 
In particular, the total slash homology of a complex corresponds to removing the maximal projective direct summand.
In contrast, $\bigoplus_{a,k} {}^aH_k(C_\bullet)$ is almost always much larger than the total slash homology.  For example, as we have mentioned in the introduction, the total homology arising from $p$-homology group for $C_\bullet = (\F \xrightarrow{ \mathrm{id}} \F)$ is $C_\bullet^{\oplus p-2}$, whereas the total homology arising from slash or backslash homology group will be $C_\bullet$ itself whenever $p>2$.

\section{Proof of first main theorem}\label{sec:newmain}

From now on we will consider the $p$-complex $\F\Omega_\bullet := (\bigoplus_{k\in \Z} \F\Omega_k, \varphi)$ as defined in the introduction.  In particular, the degree $k$ component $\F\Omega_k$ is defined as $0$ for all $k\notin \interval{0,n}$ and $\varphi_k=0$ at degree $k\notin\interval{n}$.

The aim of this section is to prove the first main theorem.  We start by recalling some results from \cite{BJS}.

For $k\in\interval{0,n}$ and $r\in\interval{p-1}$, we define an integer
\[
 h(k,r) := 2k-n+p-r.
\]
Note that in \cite{BJS}, $r$ and $h(k,r)$ are denoted by $i$ and $a$ respectively.  It will be helpful to keep in mind that our convention for $a,r$ are the indices for slash homology and $p$-homology respectively; in particular, we can switch between these indices via $a=r-1$.

\begin{theorem}\label{thm:BJS}
The following holds for any $k\in\interval{0,n}$ and $r\in\interval{p-1}$.
\begin{enumerate}
\item {\rm \cite[Theorem 3.2]{BJS}} If ${}^{r}H_k\neq 0$, then $h(k,r)\in\interval{p-1}$.
\item {\rm \cite[Theorem 6.5]{BJS}} Suppose we have $p>2$.  If $r\leq k$ and $h(k,r)=p-1$, then the $r$-th $p$-homology ${}^{r}H_k$ is isomorphic to the irreducible representation $D^{(k,k-r+1)}$ of $\F\mathfrak{S}_n$.
\end{enumerate}
\end{theorem}

Let us refine Theorem \ref{thm:BJS} (2) slightly.
\begin{lemma}\label{BJS-supplement}
Consider two integers $r\in\interval{p-1}$ and $k\in\interval{0,n}$.
If $r>k$ and $h(k,r)=p-1$, then $(k,r)=(n,n+1)$ and $p>n$.
In particular, the condition $r\leq k$ in Theorem \ref{thm:BJS} (2) is not necessary.
\end{lemma}
\begin{proof}
By the definition of $h(k,r)$ and the assumption of $h(k,r)=p-1$, we have $r=2k-n+1$.  Clearly, we must have $h(n,n+1)=p-1$; hence, the assumption on $r$ implies that $p>n$.  On the other hand, if $k<n$, say $k+c=n$ for some integer $c\geq 1$, then $r=2k-k-c+1 = k-c +1 \leq k$.

For $(k,r)=(n,n+1)$, we have $\varphi_k^r = \varphi_n^{n+1} = 0$, and so ${}^{r}H_n\cong \F\Omega_n$.  Since $p>n$, $\varphi_n^n:\F\Omega_n\to \F\Omega_0$ is a non-zero $\F\mathfrak{S}_n$-module homomorphism whose domain and codomain are both one-dimensional.  Hence, $\varphi_n^n$ is an isomorphism.  As $\F\Omega_0$ is just the trivial $\F\mathfrak{S}_n$-module $D^{(n)}$, and $(k,k-r+1)= (n,n-n-1+1)=(n,0)$, the second part of the claim follows.
\end{proof}

\begin{example}
We refer to Figure \ref{fig:n,p=12,7} for the case $(n,p)=(12,7)$.
Note that the basis used for the plot are $k$ and $a$; the indices for slash homology rather than those of $p$-homology (which is $k$ and $r$ in our convention).  We note also that the $a$-axis is tilted so that the induced $p$-differential $\overline{\varphi}^\slash$ appears horizontal.  All $k,r$ satisfying $h(k,r)=h(k,a+1)\in \interval{p-1}$ are all the points plotted in the figure.  The points labelled by an irreducible representation are those with $h(k,a+1)=p-1$.
\end{example}

\begin{figure}
\centering
\newcommand{\cpx}[4]{ 
\foreach \k in {0,...,#2}{ \node (#3v\k) at ($(#1)+(\k,0)$) {#4}; }
\pgfmathsetmacro{\arrowcount}{int(#2-1)}
\foreach \k [count=\kk] in {0,...,\arrowcount}{ \draw[->] (#3v\k) edge (#3v\kk); }
}
\begin{tikzpicture}
\node[draw] at (-11,7) {$n=12, p=7$};

\node at (-12.8,0) {$k$};
\draw[-latex ,gray] (0.2,0) -- (-12.5,0);
\node[draw] at (0,-0.4) {\footnotesize 0};
\foreach \k in {1, ...,12} {
	\draw ({-\k}, 0.1) -- +(0, -0.2);
	\node[draw] at ({-\k}, -0.4) {\footnotesize \k};
}
\node at (-3.2,-3) {$a$};
\draw[-latex, gray] (0.2,0.2) -- (-3,-3);
\foreach \a in {0, ...,5} {
	\node[draw, circle, inner sep=1pt] at ({-\a/2}, {-\a/2}) {\footnotesize \a};
	\draw[dotted] ({-\a/2},{-\a/2}) -- +(-6.3,6.3);
}
\node at (-6.2,7.2) {\small $h(k,a+1)$};
\draw[-latex, gray] (-2,3) -- (-6,7);
\foreach \h in {1, ..., 6} {
	\node[draw,circle, inner sep=1pt] at ($(-2.4,3.6) + (-0.5*\h, 0.5*\h)$) {\footnotesize \h};
	\draw[dotted] ($(-2.5,3.5) + (-0.5*\h, 0.5*\h)$) -- +(-3.3,-3.3);
}

\node (r1) at (-6,6) {$\bullet$};
\cpx{-7,5}{2}{r2}{$\bullet$};
\cpx{-8,4}{4}{r3}{$\bullet$};
\draw[dashed, ->] (r1) edge (r2v1) (r2v0) edge (r3v1) (r2v1) edge (r3v2) (r2v2) edge (r3v3);

\cpx{-8,3}{4}{r4}{$\times$};
\draw[dashed, ->] (r3v0) edge (r4v0)  (r3v1) edge (r4v1) (r3v2) edge (r4v2) (r3v3) edge (r4v3) (r3v4) edge (r4v4);
\cpx{-7,2}{2}{r5}{$\times$};
\draw[dashed, ->] (r4v1) edge (r5v0) (r4v2) edge (r5v1) (r4v3) edge (r5v2);
\node (r6) at (-6,1) {$\times$};
\draw[dashed, ->] (r5v1) edge (r6);

\draw[->] (-3,6.5) -- +(1,0) node [midway,above] {\small $\overline{\varphi}$ or $\overline{\varphi}^{\slash}$};
\draw[dashed,->] (-2.5,6.2) -- +(0,-1) node[midway,right] {\small $\overline{\iota}_k^{(a+1)}$};

\draw [decorate,decoration={brace,amplitude=10pt},xshift=-4pt,yshift=0pt]
(-8.5,3.6) -- (-8.5,6.4) node [midway, align=right, xshift=-2.2cm] 
{\footnotesize $a+1\leq h(k,a+1)\leq p-1$};

\draw [decorate,decoration={brace,amplitude=10pt},xshift=-4pt,yshift=0pt]
(-8.5,0.5) -- (-8.5,3.4) node [midway, align=right, text width=1.8cm, xshift=-1.5cm] 
{\footnotesize ${}^{a-1}H_k\neq 0$ $H_k^{\slash a}=0$};

\node[fill=white, inner sep=0pt] at (-6,6) {\scriptsize $D^{(6,6)}$};
\node[fill=white, inner sep=0pt] at (-7,5) {\scriptsize $D^{(7,5)}$};
\node[fill=white, inner sep=0pt] at (-8,4) {\scriptsize $D^{(8,4)}$};
\end{tikzpicture}
\caption{Position of non-vanishing homology groups for $(n,p)=(12,7)$.}\label{fig:n,p=12,7}
\end{figure}

The main feature shown in the proof of \cite[Theorem 6.9]{BJS}, i.e. the calculation of composition factors of ${}^rH_k$, is that one can build these composition factors inductively from those with $h(k,r)=1$ until (and excluding) $h(k,r)>r$ (and a dual inductive mechanism from $h(k,r)=p-1$ to $h(k,r)<r$).  We will explain in the following the two key lemmas that are used to build this induction.  Note that statement (3) of both of the following lemmas are not explicitly stated in \cite{BJS} but is an argument used implicitly in \cite[Proof of Theorem 6.9]{BJS}; they are immediately from combining statement (1) and (2) in the respective case.

Recall there are natural maps
\[
\overline{\iota}_{k}^{(r)} : {}^rH_k \to {}^{r+1}H_k \quad \text{ and } \overline{\varphi} : {}^rH_k \to {}^{r-1}H_{k-1}
\]
induced by the natural inclusion $\iota_k^{(r)}:\Ker(\varphi_k^r)\hookrightarrow \Ker(\varphi_k^{k+1})$ and the restriction of the $p$-differential to $\Ker(\varphi_k^r)$
respectively.

\begin{lemma}\label{lem:BJS-iota}
For any $p>2$, $k\in\interval{0,n}$ and $r\in\interval{p-1}$ such that $h(k,r)\in\interval{p-1}$, the following hold.
\begin{enumerate}
\item {\rm\cite[Lemma 6.6]{BJS}} If $r+1\geq h(k,r)$, then $\overline{\iota}_{k}^{(r)}: {}^rH_k \to {}^{r+1}H_k$ is surjective.

\item {\rm\cite[Lemma 6.7]{BJS}} If $r\leq h(k,r)$, then $\overline{\iota}_k^{(h(k,r)-r)}: {}^rH_k \to {}^{h(k,r)}H_k$ is an isomorphism.

\item If $r\leq h(k,r)$, then $\overline{\iota}_k^{(r)}: {}^rH_k \to {}^{r+1}H_k$ is injective.  In particular, ${}^rH_k$ is a direct summand of ${}^{r+1}H_k$.
\end{enumerate}
\end{lemma}

It will be helpful to understand this lemma using Figure \ref{fig:n,p=12,7}, namely, it says that the vertical arrows connecting the black points (i.e. the upper four) are injective, those connecting the crossed points (i.e. the lower four) are surjective, and those from black to crossed points (i.e. the middle five) are bijective.  Note that when $n$ is odd, then we can only say that any vertical arrow from a `black point' to a `crossed point' is surjective.

\begin{lemma}\label{lem:BJS-del}
For any $p>2$, $k\in\interval{0,n}$, and $r\in\interval{p-1}$ such that $0<h(k,r)<p$, the following hold.
\begin{enumerate}
\item {\rm\cite[Lemma 6.6]{BJS}} If $r\leq p-h(k,r)+1$, then $\overline{\varphi} : {}^rH_k \to {}^{r-1}H_k$ is surjective.

\item {\rm\cite[Lemma 6.8]{BJS}} If $2k\leq n$, then $\overline{\varphi}^{n-2k}: {}^{r}H_{n-k} \to {}^{r-(n-2k)}H_k$ is an isomorphism.

\item If $2k>n$ and $r>1$, then $\overline{\varphi}: {}^{r}H_k\to {}^{r-1}H_{k-1}$ is injective.  In particular, ${}^{r}H_k$ is a direct summand of ${}^{r-1}H_{k-1}$.
\end{enumerate}
\end{lemma}

Let us interpret this via Figure \ref{fig:n,p=12,7} again.  The horizontal arrows on the left-hand side in this diamond formed by the plotted points are the ones where Lemma \ref{lem:BJS-del} (3) applies; whereas those on the right-hand side are the ones where Lemma \ref{lem:BJS-del} (1) applies.  Lemma \ref{lem:BJS-del} (2) means that the endpoints of each horizontal line are isomorphic modules.

\begin{lemma}\label{lem:del-on-slash}
For any $p>2$, $k\in\interval{0,n}$, and $r\in\interval{p-1}$ such that $1\leq r\leq h(k,r)< p-1$, the induced $p$-differential $\overline{\varphi}^{\slash}: H_{k+1}^{\slash r}\xrightarrow{\sim} H_k^{\slash r-1}$ is an isomorphism.
\end{lemma}
\begin{proof}
Applying Lemma \ref{lem:BJS-iota} (3) and Lemma \ref{lem:BJS-del} (3) to the diagram \eqref{eq:4termseq2} yields the commutative diagram
\[\xymatrix@C=40pt{
& 0\ar[d] & 0\ar[d] & & \\
0 \ar[r] & {}^{r}H_{k+1} \ar[d]_{\overline{\varphi}} \ar[r]^{\overline{\iota}_{k+1}^{(r)}}
 & {}^{r+1}H_{k+1} \ar[d]_{\overline{\varphi}}\ar[r]  & H_{k+1}^{\slash r}\ar[d]_{\overline{\varphi}^{\slash}}^{\wr}\ar[r]&0\\
0 \ar[r] & {}^{r-1}H_k \ar[d] \ar[r]^{\overline{\iota}_{k}^{(r-1)}}   & {}^rH_k \ar[r]\ar[d] & H_k^{\slash r-1} \ar[r] & 0 \\
&  \mathrm{Coker}(\overline{\varphi}|_{{}^rH_{k+1}}) \ar[r]^{\sim}\ar[d] & \mathrm{Coker}(\overline{\varphi}|_{{}^{r+1}H_{k+1}}) \ar[d] & & \\
& 0  & 0 ,
}\]
where the rows and columns are all exact.  The claim now follows.
\end{proof}

\begin{theorem}\label{thm:newfinal}
For any prime $p$ and any positive integer $n$, the slash homology $p$-complex $H_k^/$ (resp. $H_{n-k}^{\backslash}$) is non-vanishing if, and only if, $p\neq 2$ and $n-2k\in\interval{0, p-2}$.  In such a case, we have
\begin{align*}
H_k^{/} &\cong  \big( H_{n-k}^{/n-2k} \xrightarrow{\sim} H_{n-k-1}^{/n-2k-1} \xrightarrow{\sim} \cdots \xrightarrow{\sim} H_{k+1}^{/1} \xrightarrow{\sim} H_k^{/0} \big) \\
&\cong \big( D^{(n-k,k)} \xrightarrow{\sim} D^{(n-k,k)} \xrightarrow{\sim} \cdots \xrightarrow{\sim} D^{(n-k,k)}\xrightarrow{\sim} D^{(n-k,k)}\big) \\
& \cong  \big( H_{n-k}^{\backslash 0} \xrightarrow{\sim} H_{n-k-1}^{\backslash 1} \xrightarrow{\sim} \cdots \xrightarrow{\sim} H_{k+1}^{\backslash n-2k-1} \xrightarrow{\sim} H_k^{\backslash n-2k} \big) \;\cong\; H_{n-k}^{\backslash}.
\end{align*}
In particular, the following hold.
\begin{enumerate}
\item $H_k^{/a}$ is non-vanishing if, and only if, $p\neq2$ and  $n-2(k-a)\in\interval{a,p-2}$; in such a case, we have $H_k^{\slash a}\cong D^{(n-(k-a), k-a)}$.

\item $H_{k}^{\backslash a}$ is non-vanishing if, and only if, $p\neq2$ and  $2(k+a)-n\in\interval{a,p-2}$; in such a case, we have $H_k^{\backslash a} \cong D^{(k+a, n-(k+a))}$
\end{enumerate}
\end{theorem}
\begin{proof}
For $p=2$, we know already that $\F\Omega_\bullet$ is exact and so all homologies vanish. We only need to consider the case when $p\neq 2$.

For convenience of translating results shown before, we take $r:=a+1\in\interval{p-1}$.  
Recall that $H_k^{\slash r-1} = \mathrm{Coker}(\overline{\iota}_{k}^{(r-1)})$ from the four-term exact sequence \eqref{eq:4termseq}.  So $H_k^{\slash r-1}\neq 0$ implies that ${}^rH_k\neq 0$ as well as $\overline{\iota}_{k}^{(r-1)}$ is not surjective.  Thus, it follows from Theorem \ref{thm:BJS} (1) and Lemma \ref{lem:BJS-iota} (1) that $H_k^{\slash r-1}\neq 0$ implies $h(k,r) \in\interval{p-1}$ and $h(k,r-1)>r$.

Since $h(k,r-1)=h(k,r)+1$, the two conditions $h(k,r) \in\interval{p-1}$ and $h(k,r-1)>r$ can be combined to $h(k,r)\in\interval{r,p-1}$.  Using the definition of $h(k,r)$ and rearranging the inequalities $r\leq h(k,r)\leq p-1$ yield $n-2(k-a)\in\interval{a,p-2}$ in the claim.

Consider now the case when $h(k,r)=p-1$, i.e. $2k-n=r-1=a\in\interval{0,p-2}$, it  follows from  Lemma \ref{BJS-supplement} (and Theorem \ref{thm:BJS} (2)) that ${}^rH_k = D^{(k,k-a)} = D^{(k, n-k)}$.  Since $h(k,r-1)=h(k,r)+1$, it follows from Theorem \ref{thm:BJS} (1) that ${}^rH_{k-1}=0$, and so \[
H_k^{\slash a}=\mathrm{Coker}(\overline{\iota}_{k}^{(r-1)}) \cong {}^rH_k \cong D^{(n-(k-a),k-a)} \cong D^{(k,n-k)}.
\]

By repeatedly applying Lemma \ref{lem:del-on-slash}, we then obtain a sequence of isomorphisms
\[
H_{k}^{\slash 2k-n} \xrightarrow{\;\,\overline{\varphi}^{\slash}\;}  H_{k-1}^{\slash 2k-n-1} \xrightarrow{\;\,\overline{\varphi}^{\slash}\;} \;\; \cdots \;\;\xrightarrow{\;\,\overline{\varphi}^{\slash}\;}   H_{n-k+1}^{\slash 1} \xrightarrow{\;\,\overline{\varphi}^{\slash}\;} H_{n-k}^{\slash 0}.
\]
Now the claim for the slash homology follows by reordering the indices (that is, mapping $k$ in the above sequence to $n-k$ everywhere).

The claim for the backslash homology follows immediately from the form of the slash homology.
\end{proof}
The groups ${}^rH_k$ when $h(k,r)\geq r$ can be obtained by inductively  direct summing ${}^{r-1}H_k$ with ${}^{r+1}H_{k+1}$ and then removing the common direct summand ${}^{r}H_{k+1}$.  For $1\leq h(k,r)<r$, one use Lemma \ref{lem:BJS-iota} (2) to `flip' the groups from the other half.
Let us demonstrate this in the following example.

\begin{example}
We refer again to Figure \ref{fig:n,p=12,7} for the case $(n,p)=(12,7)$.  All the points in the plot represent the positions where ${}^{a+1}H_k$ (hence, $H_k^{\slash a}$ and $H_k^{\backslash p-2-a}$) is non-zero.  The horizontal arrows are the induced $p$-differential of the slash homology as well as the $p$-homology.  The vertical arrows are the $\iota_k^{(a+1)}$ maps on the $p$-homology groups.  The black dots, including those labelled $D^{(n-k,k)}$ are the points where the slash homology is non-zero.  The slash homologies are simply the horizontal lines consisting of these points.

As we have mentioned above, the $p$-homology groups at these points can be obtained inductively starting from $h(k,a+1)=p-1$.  For example, ${}^3H_6 = {}^2H_6\oplus {}^4H_7/{}^3H_7 = D^{(6,6)}\oplus D^{(7,5)}\oplus D^{(8,4)}$.  The $p$-homology groups at the crossed dots can be obtained by looking at the by reflecting those from $h(k,a+1)\in\interval{a+1,p-1}$ along the horizontal line that divides the black and the crossed dots.  For example, ${}^4H_5 \cong {}^1H_5 = D^{(7,5)}$.
\end{example}

\section{Tableaux combinatorics for permutation modules}\label{sec:tab}

In this section, we review some basic tableaux combinatorics that arise from permutation modules.  For simplicity, we will mostly focus only on the case when the associated partition or composition is two-row; for more general cases, see, for example, \cite{James3}.

Let us start by establishing the convention on some symbols.

For a finite set $X$, denote by $\mathfrak{S}_{X}$ the group of symmetries on $X$.  Clearly, $\mathfrak{S}_{X}$ is isomorphic to the symmetric group $\mathfrak{S}_{r}$ of rank $r:=|X|$.  For a subgroup $G\leq \mathfrak{S}_X$, and a subset $Y\subset X$, the stabiliser (subgroup) of $Y$ is given by the set of $g\in G$ such that $(Y)g=Y$ (instead of $(y)=y$ for all $y\in Y$); we also call this the $Y$-stabiliser if the context is clear.

For convenience, given a $\F$-basis $\mathcal{B}$ of a module $M$, we say that $b\in\mathcal{B}$ is a \emph{support of $v\in M$} if the coefficient of $b$ in $v$ is non-zero.

\medskip

Let $\tbx$ be a (Young) tableau.  Denote by $\tbx_{i,j}$ the content of the box located at (row, column)$=(i,j)$, with row counting from top to bottom and column counting from left to right.
From now on, we will only work with tableaux that have at most two rows.

\begin{definition}\label{def:tab}
For a $(n-k,k)$-tableau $\tbx$ is \emph{row-strict} (resp. \emph{column-strict}) if the entries in each row (resp. column) are arranged in increasing order.  $\tbx$ is \emph{standard} if it is both row- and column-strict.  We denote by $\ST_n(k)$ the set of standard Young tableaux of shape $(n-k,k)$.
\end{definition}

Recall the following relation between row-equivalent classes (a.k.a. \emph{tabloids}) of $(n-k,k)$-tableaux and $k$-subsets:
For a tableau $\tbx$, its \emph{associated $k$-subset}, denoted by 
\[\{\tbx\}:=\{\tbx_{2,1}, \tbx_{2,2}, \ldots, \tbx_{2,k}\} \in \Omega_k,\]
consist of all the elements in its second row.
Conversely, given a $k$-subset $\omega$, then we have a row-strict tableau $\tbx^\omega$ whose first row consists of elements of the complement of $\omega$ in $\interval{n}$ and second row consists of elements of $\omega$, where both rows are arranged in increasing order.
Clearly, we have $\omega = \{\tbx^\omega\}$, and so we call $\tbx^\omega$ is the \emph{tableau associated to a $k$-subset}.

\begin{example}
Take $n=8$, $k=3$, and $\omega=\{2,3,7\}$, then the (row-strict) tableau $\tbx^\omega$ associated to $\omega$ is
\begin{center}\begin{tikzpicture}[scale=0.4]\drawtbx{{1,4,5,6,8},{2,3,7}}{(0,0)}
\end{tikzpicture}.\end{center}
\end{example}

As we have mentioned in the introduction, $\F\Omega_k$ is the same as the permutation module associated to the Young subgroup $\mathfrak{S}_{n-k}\times \mathfrak{S}_k$.  Indeed, the row-equivalence class of a $(n-k,k)$-tableau, which correspond to a $k$-subset as we have just explained, is simply a shorthand of writing a coset of $\mathfrak{S}_{n-k}\times \mathfrak{S}_k$ in $\mathfrak{S}_n$.

Let us now look at an important submodule of $\F\Omega_k$.

\begin{definition}\label{def:poly}
For an $(n-k,k)$-tableau $\tbx$, denote by $C_{\tbx}\leq \mathfrak{S}_n$ the column stabiliser of $\tbx$, i.e. the direct product 
\[
\prod_{j=1}^k \langle (\tbx_{1,j}, \tbx_{2,j}) \rangle.\]
The \emph{polytabloid associated to $\tbx$} is the element 
\[
\bfe_{\tbx} := \{\tbx\}\sum_{ \sigma\in C_{\tbx}} \mathrm{sgn}(\sigma)\sigma \quad \in \F\Omega_k,\]
where $\mathrm{sgn}(\sigma)$ denotes the sign of the permutation $\sigma\in C_\tbx$.  We say that $\bfe_{\tbx}$ is \emph{standard} if so is $\tbx$.
\end{definition}

Note that a polytabloid is dependent on the defining tableau, i.e. row-equivalent but distinct tableaux can define distinct polytabloids.
\begin{example}
For $n=8$, let $\tbx$ be a tableau associated to $\interval{3}$ and $\tbx'$ be the tableau 
\begin{center}
\begin{tikzpicture}[scale=0.4]
\node at (-2,0) {$\tbx=\tbx^{\interval{3}}=$};
\drawtbx{{4,5,6,7,8},{1,2,3}}{(0,0.5)}
\node at (8,0) {and $ \tbx'=$};
\drawtbx{{8,7,6,5,4},{1,2,3}}{(9.5,0.5)}
\end{tikzpicture}.
\end{center}
The two tableaux are row-equivalent, but the polytabloid associated to $\tbx$ and $\tbx'$ are
\begin{align*}
\bfe_{\tbx} &= \{1,2,3\} -\{1,2,6\} - \{1,3,5\} + \{1,5,6\} - \{2,3,4\}  + \{2,4,6\} +\{3,4,5\} -\{4,5,6\} \\
\text{and }\; \bfe_{\tbx'} &= \{1,2,3\} -\{1,2,6\} - \{1,3,7\} + \{1,6,7\} - \{2,3,8\} + \{2,6,8\} +\{3,7,8\} -\{6,7,8\}
\end{align*}
respectively.
\end{example}

\medskip

\begin{definition}
For an integer $k$ with $n-2k\geq 0$, the \emph{Specht module} $S^{(n-k,k)}$ associated to a partition $(n-k,k)$ of $n$ is the $\F\mathfrak{S}_n$-submodule of $\F\Omega_k$ spanned by all polytabloids, i.e.
\[
S^{(n-k,k)} := \F\text{-span}\{ \bfe_{\tbx} \mid \tbx\text{ is a $(n-k,k)$-tableau}\}.
\]
\end{definition}

Note that, if $\tbx'=\tbx \sigma$ for some $\sigma\in\mathfrak{S}_n$, then one can easily see that $\bfe_{\tbx'} = \bfe_{\tbx\sigma} = \bfe_{\tbx}\sigma$.
In particular, $S^{(n-k,k)}$ is generated (over $\F\mathfrak{S}_n$) by any single polytabloid.

In the representation theory over $\mathfrak{S}_n$, the map $\varphi$ (along with a general version in the setting of any permutation module) was exploited by James to obtain a filtration of a permutation module with subquotient being Specht modules; see \cite{James3} for details.  We will only need the following results.

\begin{theorem}\label{thm:James}
Let $k$ be an integer with $2k\in\interval{0,n}$.  Then the following hold.
\begin{enumerate}
\item {\rm\cite{James1}} If $p\neq 2$, then $S^{(n-k,k)}$ has an irreducible top $D^{(n-k,k)}=S^{(n-k,k)}/\mathrm{rad}S^{(n-k,k)}$ such that $D^{(n-k,k)}\ncong D^{(n-r,r)}$ for any $r\neq k$.

\item {\rm\cite[Theorem 9.1]{James2}} $S^{(n-k,k)}\subset \Ker(\varphi_k)$.

\item {\rm \cite[Theorem 12.1]{James3}} The multiplicity of the simple module $D^{(n-k,k)}$ in the permutation module $M^{(n-k,k)}=\F\Omega_k$ is $1$.
\end{enumerate}
\end{theorem}
\begin{remark}
For a general partition $\lambda$, $S^\lambda$ may not be indecomposable (and so may not have a simple top), but this is always the case for $\lambda=(n-k,k)$ when $p\neq 2$; see \cite[Corollary 13.18]{James3}.
Also, $D^\lambda$ is conventionally defined as a possibly-zero quotient of  $S^\lambda$.  We took the liberty of simplifying this technicality by restricting to a very specific setting.
\end{remark}

It was shown by Peel \cite{Pe} that every Specht module admits a basis given by standard polytabloids.  We will need some combinatorial tools that are used in the proof.  For convenience, we translate them to our setting of $k$-subsets.  Details of the general case can be found in most standard textbook, such as \cite[Chapter 2]{Sag} and \cite[Chapter 8, 9]{James3}.

\begin{definition}\label{def:Sa-order}
Denote by $\omega\unrhd\omega'$ if $|\omega\cap \interval{i}| \leq |\omega'\cap \interval{i}|$ for all $i\in\interval{k}$; this defines a partial order on $\Omega_k$ called the \emph{dominance order}.
\end{definition}
\begin{remark}\label{rmk:orders}
The partial order $\unrhd$ is coarser than the lexicographical order (where we regard $\Omega_k$ as the set of strictly increasing $k$-tuples of elements in $\interval{n}$), but it does not matter which order one prefer to use with respect to the next result.
\end{remark}

\begin{theorem}\label{thm:Spechtbasis}
The following hold for any integer $k$ with $2k\in\interval{0,n}$.
\begin{enumerate}[(1)]
\item {\rm\cite[7.4]{James2}} If $\tbx$ is standard, then $\{\tbx\}$ is maximal in the poset $(\mathrm{Supp}(\bfe_{\tbx}), \unrhd)$ of the supports of $\bfe_{\tbx}\in S^{(n-k,k)}$ under with the dominance order.

\item {\rm \cite{Pe}} The set $\{ \bfe_{\tbx}\mid \tbx\in \ST_n(k)\}$ of standard polytabloid forms a basis of $S^{(n-k,k)}$.
\end{enumerate}
\end{theorem}


\section{Basis of the slash homology groups}\label{sec:basis}

Our next aim is to give a combinatorial description for the basis of (a non-vanishing) $H_k^{/0}$.  In view of Theorem \ref{thm:newfinal}, we will assume the following
\[
\text{\bf Assumption}: \qquad n-2k \in \interval{0,p-2}
\]
throughout this section, unless otherwise stated.

\begin{definition}\label{def:p-std}
We call a $(n-k,k)$-tableau $\tbx$ \emph{$p$-standard} if $n-2k\in\interval{0,p-2}$, $\tbx$ is standard and satisfies $\tbx_{2,j}<\tbx_{1,j+p-2}$ for all $j\in\{1, 2, \ldots, n-k-(p-2)\}$.  Denote by $\ST_n^p(k)$ the set of $p$-standard $(n-k,k)$-tableaux.
\end{definition}
\begin{remark}
As far as we know, the terminology `$p$-standard' was first used in Kleshchev's article \cite{Kl}. The definition presented here is only for the special case of two-row partitions, see \cite{Kl} for the case of tableaux in other shapes.
\end{remark}

\begin{proposition}\label{prop:dimD}
Suppose $k$ is an integer satisfying $n-2k\in\interval{0,p-2}$.
Then $\dim_{\F} D^{(n-k,k)}$ coincides with the number of $p$-standard $(n-k,k)$-tableaux.  In particular, 
\[
\dim_{\F} D^{(n-k,k)} = \sum_{j\in \Z} \binom{n}{pj+k} - \binom{n}{pj+k-1},
\]
where the binomial coefficient $\binom{a}{b}$ with $b>a$ or $b$ negative is considered zero.
\end{proposition}
\begin{remark}
The first part of this is from Kleshchev's work \cite[Corollary 2.4]{Kl}, which is obtained by translating the crystal graph combinatorics of Mathieu to $p$-standard tableau, c.f. \cite[Proposition 2.3]{Kl} and \cite[Theorem 3]{Mat}.
The dimension formula is also obtained independently in \cite{Erd} - namely, \cite[Example 5.3]{Erd} by taking $(m, s, t, d, \delta)$ therein as $(k, n-2k, 0, n-2k+1, p-(n-2k+1))$.   Erdmann's proof uses tensor products of tilting modules over $GL_2(\F)$ and the Schur-Weyl duality; see also \cite{Mat}.
\end{remark}

Let us give another proof of the dimension formula using a different combinatorics.

\begin{proof}
There is a well-known lattice path interpretation for standard $(n-k,k)$-tableaux.  Namely, consider a lattice paths in $\Z^2$ starting at $(0,0)$, ending at $(n-k,k)$, (non-strictly) bounded by the $x$-axis and the diagonal $y=x$, and with each step of the path being either $(0,1)$ or $(1,0)$.  Then there is a corresponding standard $(n-k,k)$-tableau $\tbx$ such that $i\in\interval{n}$ is in the first row if, and only if, the $i$-th step of the path is $(1,0)$. c.f. \cite[p.4-5]{Moh}.

We claim that, under this bijection, the $p$-standard tableaux correspond to the lattice paths lying inside the region $0 \geq y-x > -(p-1)$.  Indeed, it is straightforward from the definition that a standard but non-$p$-standard tableau gives rise to a path whose $i$-th $(0,1)$-step starts at $(x,y)$ with $y\leq x-(p-1)$.  Conversely, for a path with the said property of $i$-th $(0,1)$-step, then we have $y=i-1$ and so $x\geq i+p-2$.  Since $x$ counts the number of contents in the first row of  $\tbx$ that are less than $\tbx_{2,i}$, so $x\geq i+p-2$ implies that $\tbx_{2,i}>\tbx_{1,i+p-2}$ - meaning that $\tbx$ is non-$p$-standard.

As the dimension of $D^{(n-k,k)}$ for $n-2k\in\interval{0,p-2}$ is the same as the number of lattice paths bounded in $0 \geq y-x > -(p-1)$, the claim now follows by using existing formula for lattice path enumeration - for example, taking $(m,n,s,t,k)=(n-k,k,1,p-1,j)$ in \cite[Theorem 2]{Moh}.
\end{proof}

None of the results mentioned above gives any explicit description of basis of $D^{(n-k,k)}$.  It turns out that if we view $D^{(n-k,k)}$ as $H_k^{\slash 0}$, then it is possible to use the combinatorics of $\varphi$ to find an explicit basis for $D^{(n-k,k)}$ - this is our second main theorem, c.f. the standard polytabloids basis of Specht modules (Theorem \ref{thm:Spechtbasis} (2)).

\begin{theorem}\label{thm:basis}
Suppose $k$ is an integer satisfying $n-2k\in\interval{0,p-2}$.
Then a basis of the slash homology group $H_k^{/0}$ is given by \[
\big\{\, [\bfe_\tbx]\, \big|\, \tbx \text{ is a $p$-standard tableau of shape $(n-k,k)$ } \big\},
\]
where $[v]$ denotes $v+\Image(\varphi_{k-p+1}^{p-1}) \in H_k^{/0}$ for $v\in \Ker(\varphi_k)$.
In particular, $D^{(n-k,k)}$ has a basis given by $\{ \bfe_\tbx+\mathrm{rad}S^{(n-k,k)}\mid \tbx \in \ST_n^p(k)\}$.
\end{theorem}
\begin{remark}
Note that this description works in the case when $p=2$ as well, since $H_k^{/0}=0$ and there does not exist any $p$-standard tableau.  Hence, we did not need to impose the assumption that $p\neq 2$ throughout this section.
\end{remark}

We will prove Theorem \ref{thm:basis} in what follows.
The strategy is to modify the \emph{straightening rule} - the induction process that is used to show that a Specht module admits a basis given by standard polytabloids in \cite{Pe}.  The modification are designed so that we can keep track of how ``far" away the tableaux appearing in a Garnir relation is from being a $p$-standard tableau.

\subsection{Almost standard tableaux and their elementary properties}\label{subsec:basisprelim}
In this subsection, we present the definition of the partially ordered set that allows us to prove Theorem \ref{thm:basis} by induction, and also a few techniques that we will use frequently.

\begin{definition}\label{def:aST}
Let $\tbx$ be a $(n-k,k)$-tableau.
\begin{enumerate}[(1)]

\item We say that $\tbx$ \emph{almost standard} if all of the following hold.
\begin{itemize}
\item $\tbx$ is column-strict (i.e. entries increase as we go down each column);
\item $\tbx$ is second-row-strict (i.e. entries in the second row increase as we go right).
\end{itemize}
Denote by $\aST_n(k)$ the set of almost standard $(n-k,k)$-tableaux.

\item Define a partial order $\preceq$ on $\aST_n(k)$ by declaring $\tbx\preceq\tbx'$ if one of the following conditions are satisfied.
\begin{enumerate}[(i)]
\item $\tbx=\tbx'$.
\item $|\{\tbx\}\cap\interval{i}|\leq |\{\tbx'\}\cap\interval{i}|$ for all $i\in\interval{n}$ with at least one of the inequalities being strict.
\item $\{\tbx\}=\{\tbx'\}$ and for $\interval{n}\setminus\{\tbx\} = \interval{n}\setminus\{\tbx'\} = \{x_1 < x_2 < \cdots < x_{n-k}\}$, there is some $j\in\interval{n-k}$ such that
\begin{itemize}
\item $x_i$ lies in the same position in both $\tbx,\tbx'$ for all $i<j$, and
\item $x_j=\tbx_{1,a}=\tbx_{1,b}'$ with $a>b$.
\end{itemize}
\end{enumerate}
\end{enumerate}
\end{definition}

\begin{remark}\label{rmk:poset}
(1) Condition (ii) implies that the subposet $(\ST_n(k),\preceq)$ is isomorphic to the poset of standard tabloids equipped with the (opposite of) dominance order (Definition \ref{def:Sa-order}).  For us, condition (ii) measures how far a standard tableau is from being $p$-standard.

(2) Condition (iii) is used to measure how far an almost standard tableau $\mathtt{u}$ is from being the standard tableau $\tbx^{\{\mathtt{u}\}}$.  One can observe that, for any $k$-subset $\omega$, this condition coincides with the lexicographical order on $(n-k)$-tableaux with fillings from $\interval{n}\setminus \omega$ (i.e. sequences of length $n-k$ without repeating entries).
\end{remark}

\begin{example}
For $(n,k)=(5,2)$, then $(\aST_n(k),\preceq)$ is as follows. Note that the standard $(3,2)$-tableaux are drawn in bold.
\begin{center}\begin{tikzpicture}[scale=0.4]
\node[rotate=40] at (4,1) {$\succ$};
\node[rotate=-40] at (4,-2) {$\succ$};
\node at (7.8,2.5) {$\succ$};
\node at (7.8,-3.5) {$\succ$};
\node[rotate=40] at (11.7,-2) {$\succ$};
\node[rotate=-40] at (11.7,1) {$\succ$};
\node[rotate=-40] at (15,-2) {$\succ$};
\node at (18.6, -3.5) {$\succ$};
\node at (23, -3.5) {$\succ$};
\node[rotate=45] at (26.7,-2) {$\succ$};
\node[rotate=-45] at (30,-2) {$\succ$};
\node at (33.8, -3.5) {$\succ$};
\node at (35,-3.5) {$\cdots$};

\drawtbx[very thick]{{1,3,5},{2,4}}{(0,0)}
\drawtbx[very thick]{{1,3,4},{2,5}}{(3.7,-3)} 
\drawtbx{{1,4,3},{2,5}}{(8,-3)} 
\drawtbx[very thick]{{1,2,5},{3,4}}{(3.7,3)} 
\drawtbx{{2,1,5},{3,4}}{(8,3)} 
\drawtbx[very thick]{{1,2,4},{3,5}}{(11.5,0)} 
\drawtbx[very thick]{{1,2,3},{4,5}}{(26.5,0)} 
\drawtbx{{1,4,2},{3,5}}{(14.5,-3)} 
\drawtbx{{2,1,4},{3,5}}{(18.8,-3)} 
\drawtbx{{2,4,1},{3,5}}{(23.1,-3)} 
\drawtbx{{1,3,2},{4,5}}{(29.5,-3)} 
\end{tikzpicture}\end{center}
Note that only the largest tableau $\tbx^{\{2,4\}}$ is $p$-standard when $p=3$; for any prime $p>3$, all standard tableaux are $p$-standard.
\end{example} 

The following lemma gives us greater freedom to manipulate the polytabloids; they follow immediately from the definition.

\begin{lemma}\label{lem:col-swap}
	The following hold for a $(n-k,k)$-tableau $\tbx$.
	\begin{enumerate}[(1)]
		\item Take $i\leq j\in\interval{k}$ and the following \emph{columns reordering permutation}
		\[
		\rho_{\tbx,i,j} := (\tbx_{1,j}, \tbx_{1,j-1}, \ldots, \tbx_{1,i})(\tbx_{2,j}, \tbx_{2,j-1}, \ldots, \tbx_{2,i})
		\] 
		in $\mathfrak{S}_n$, then $\bfe_{\tbx}=\bfe_{\tbx}\rho_{\tbx,i,j}$.
		
		\item If $(\tbx_{i,j})\sigma=\tbx_{i,j}$ for all $i\in\{1,2\}$ and all $j\in\interval{k}$, then $\bfe_{\tbx}=\bfe_{\tbx}\sigma=\bfe_{\tbx\sigma}$. 
	\end{enumerate}
\end{lemma}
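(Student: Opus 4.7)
The plan is to reduce both parts to the identity $\bfe_\tbx\sigma = \bfe_{\tbx\sigma}$ recorded just after Definition~\ref{def:poly}. Since $\bfe_{\tbx'}=\{\tbx'\}\kappa_{\tbx',k}$ by the convention on omitted $\ell$, in each case it suffices to establish $\{\tbx'\}=\{\tbx\}$ and $C_{\tbx',k}=C_{\tbx,k}$ (the latter giving $\kappa_{\tbx',k}=\kappa_{\tbx,k}$), where $\tbx':=\tbx\rho_{\tbx,i,j}$ in part (1) and $\tbx':=\tbx\sigma$ in part (2).

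For (1), I would first observe that the two disjoint cycles making up $\rho:=\rho_{\tbx,i,j}$ permute the first-row entries $\tbx_{1,i+1},\ldots,\tbx_{1,j}$ among themselves, and similarly the corresponding second-row entries, while fixing every other element of $\interval{n}$. This immediately yields $\{\tbx\rho\}=\{\tbx\}$. A direct inspection of the cycles then shows that, as an unordered pair of entries, the $j_0$-th column of $\tbx$ becomes the $(j_0+1)$-th column of $\tbx\rho$ for $j_0\in\interval{i+1,j-1}$, the $j$-th column becomes the $(i+1)$-th column of $\tbx\rho$, and every other column is unchanged. Hence the family of column transposition generators $\{(\tbx_{1,j_0},\tbx_{2,j_0})\mid j_0\in\interval{k}\}$ of $C_{\tbx,k}$ coincides as a set with the corresponding family for $\tbx\rho$, so $C_{\tbx\rho,k}=C_{\tbx,k}$ and consequently $\bfe_\tbx\rho=\bfe_{\tbx\rho}=\bfe_\tbx$.

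Part (2) is then essentially immediate from the same framework: the hypothesis that $\sigma$ fixes every entry $\tbx_{i,j}$ with $i\in\{1,2\}$ and $j\in\interval{k}$ gives $\{\tbx\sigma\}=\{\tbx\}$ directly (the whole second row being entrywise fixed), and each generator $((\tbx\sigma)_{1,j_0},(\tbx\sigma)_{2,j_0})=(\tbx_{1,j_0},\tbx_{2,j_0})$ of $C_{\tbx\sigma,k}$ already matches its counterpart in $C_{\tbx,k}$, so $\bfe_\tbx\sigma=\bfe_{\tbx\sigma}=\bfe_\tbx$. Neither part poses a genuine obstacle; the only mildly delicate step is the cycle-tracking in (1), which amounts to checking that the two commuting cycles of $\rho$ permute column contents in lockstep, so that the columns of $\tbx$ are carried bijectively to columns of $\tbx\rho$.
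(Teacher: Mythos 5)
Your proposal is correct and follows essentially the same route as the paper, which simply asserts that both parts ``follow immediately from the definition of polytabloid''; you have just unpacked that one-line proof by verifying $\{\tbx'\}=\{\tbx\}$ and $C_{\tbx',k}=C_{\tbx,k}$ for $\tbx'=\tbx\rho_{\tbx,i,j}$ (resp.\ $\tbx'=\tbx\sigma$) and invoking $\bfe_{\tbx}\sigma=\bfe_{\tbx\sigma}$. The cycle-tracking in part (1), showing the two commuting cycles carry columns to columns in lockstep, is exactly the check the paper leaves implicit, and it is done correctly.
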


\begin{lemma}\label{lem:bad-swap}
Consider two almost standard tableaux $\tbx,\tbx'\in \aST_n(k)$.
If $\{\tbx\}$ and $\{\tbx'\}$ differ only by one element, say, $\{\tbx\}=S\sqcup\{a\}$ and $\{\tbx'\}=S\sqcup \{b\}$ with $a>b$, then $\tbx\prec \tbx'$.
\end{lemma}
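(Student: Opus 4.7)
The plan is to verify the row-dominance condition (ii) in Definition~\ref{def:aST} directly. Since the hypothesis gives $\{\tbx\} = S \sqcup \{a\} \neq S \sqcup \{b\} = \{\tbx'\}$, the equality condition (iii) (which requires $\{\tbx\}=\{\tbx'\}$) is inapplicable, so establishing (ii) is the only route to the conclusion $\tbx \prec \tbx'$. Observe that the ``almost standard'' hypothesis is not actually needed for the computation itself; it only enters insofar as it ensures $\tbx,\tbx'\in\aST$ so that $\preceq$ applies to them.

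Next, I would compute, for each $i\in\interval{n}$, the difference
\[
|\{\tbx'\}\cap\interval{i}| - |\{\tbx\}\cap\interval{i}| = \mathbbm{1}_{[b\leq i]} - \mathbbm{1}_{[a\leq i]},
\]
using $a,b \notin S$ (since the unions are disjoint). Splitting into the three ranges $i<b$, $b\leq i<a$, and $i\geq a$, the right-hand side is respectively $0$, $1$, and $0$. In particular the inequality $|\{\tbx\}\cap\interval{i}|\leq|\{\tbx'\}\cap\interval{i}|$ holds for all $i\in\interval{n}$, and since $a>b$ implies $b\leq b<a$, the inequality is strict at $i=b$.

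Thus condition (ii) of Definition~\ref{def:aST} is satisfied, which gives $\tbx \prec \tbx'$ as claimed. No real obstacle is expected in this argument since the lemma is essentially a direct unwinding of the row-dominance order; the only care needed is to rule out condition (iii) first and to keep track of the disjointness $a,b\notin S$ in the case analysis.
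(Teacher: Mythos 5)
Your proposal is correct and matches the paper's approach: the paper's entire proof is the one-line observation that condition (ii) of Definition \ref{def:aST} holds by assumption, and your case analysis on $i<b$, $b\leq i<a$, $i\geq a$ is simply the explicit verification of that fact, including the strict inequality at $i=b$. Your remark that condition (iii) is inapplicable because $\{\tbx\}\neq\{\tbx'\}$ is a reasonable extra precaution, though not needed since (ii) alone suffices to conclude $\tbx\prec\tbx'$.
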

\begin{proof}
Condition (ii) in Definition \ref{def:aST} holds by assumption (c.f. \cite[3.14, 3.15]{James3}).
\end{proof}
Colloquially, Lemma \ref{lem:bad-swap} says that if we replace an entry in the second row of an almost standard tableau by a smaller value, then the new almost standard tableau (after an appropriate column reordering if needed) is larger than the original one in the $\preceq$-order.

The key in showing standard polytabloids span the Specht module is to use \emph{Garnir relation} to `straighten' a non-standard tableau (and apply induction); c.f. \cite[Section 7, 8]{James3}.  Refining this will be helpful to perform induction on almost standard tableaux.

\begin{lemma}\label{lem:Garnir}
Let $\tbx$ be an almost standard $(n-k,k)$-tableau.
If there is some $j\in \interval{k}$ such that $\tbx_{1,j}>\tbx_{1,j+1}$, then $\bfe_{\tbx}=\bfe_{\mathtt{u}}-\bfe_{\mathtt{v}}$ for some  $\mathtt{u}, \mathtt{v}\succ\tbx$ with $\{\tbx\}=\{\mathtt{u}\}$.
\end{lemma}
\begin{proof}
Since $\tbx$ is non-standard, we have the following Garnir relation
\[
\bfe_\tbx = \bfe_{\mathtt{u}} - \bfe_{\mathtt{w}} ,
\]
where $\mathtt{u}:=\tbx(\tbx_{1,j}, \tbx_{1,j+1})$ and $\mathtt{w}:=\tbx(\tbx_{1,j}, \tbx_{1,j+1}, \tbx_{2,j})$. We can see immediately that $\{\mathtt{u}\}=\{\tbx\}$ and $\mathtt{u}\succ\tbx$ as required.

Let $i$ be the smallest positive integer so that $\tbx_{1,j} < \tbx_{2,i}$.  Since column-strictness implies that $\tbx_{1,j}<\tbx_{2,j}$, second-row-strictness in turn implies that $i\leq j$. So $\bfe_{\mathtt{w}} = \bfe_{\mathtt{w}}\rho_{\mathtt{w},i,j}$ by Lemma \ref{lem:col-swap} (1).

We claim that $\mathtt{v}:=\mathtt{w}\rho_{\mathtt{w},i,j}\succ \tbx$.  Indeed, by observing that $\mathtt{w}$ is column-strict, the choice of $i,j$ ensures that $\mathtt{v}\in \aST_n(k)$.  Since $\tbx_{2,j}>\tbx_{1,j}$ (by column-strictness of $\tbx$), $\{\mathtt{v}\} = \{\mathtt{w}\}$ is obtained from replacing the element $\tbx_{2,j}$ in $\{\tbx\}$ by the smaller $\tbx_{1,j}$.
Thus it follows from Lemma \ref{lem:bad-swap} that $\mathtt{v}\succ \tbx$, as required.
\end{proof}

\begin{example} Consider the almost standard tableau
\begin{center}\begin{tikzpicture}[scale=0.4]
	\node at (-0.5,-.7) {$\tbx =$};
	\drawtbx{{1,3,2},{4,5}}{(0,0)}
\end{tikzpicture}.\end{center}
Applying Lemma \ref{lem:Garnir} (with $j=2$) yields $\bfe_\tbx = \bfe_{\mathtt{u}}-\bfe_{\mathtt{w}} = \bfe_{\mathtt{u}}-\bfe_{\mathtt{v}}$, 
where the tableaux involved are
\begin{center}\begin{tikzpicture}[scale=0.4] 
	\node at (-5.2,-.5) {$\mathtt{u}=\tbx(2,3)=$};
	\drawtbx{{1,2,3},{4,5}}{(-3,0)}
	\node at (4.5,-.5) {$,\quad  \mathtt{w}=\tbx(3,2,5)=$};
	\drawtbx{{1,2,5},{4,3}}{(8,0)}
	\node at (17,-.5) {$\notin \aST_n(k), \quad \mathtt{v} =\mathtt{w}\rho_{\mathtt{w},1,2}=$};
	\drawtbx{{2,1,5},{3,4}}{(22,0)}
\end{tikzpicture}.\end{center}
It is easy to check that $\mathtt{u},\mathtt{v}\succ \tbx$.
\end{example}

Let us refine the Lemma \ref{lem:Garnir} in the following `induction-friendly' form.

\begin{lemma}\label{lem:nonstd}
If an almost standard tableau $\tbx\in \aST_n(k)$ is non-standard, then we have
\[
\bfe_{\tbx} \in \bfe_{\mathtt{s}}+\Fspan\{\bfe_{\tbx'}\mid \tbx\prec \tbx'\in \aST_n(k)\}\]
for the (unique) standard tableau $\mathtt{s}$ with $\{\mathtt{s}\}=\{\tbx\}$.
\end{lemma}
\begin{proof}
By assumption, there is some $j\in\interval{n-k}$ such that $\tbx_{1,j}>\tbx_{1,j+1}$.  Consider the case $j>k$.  Then it follows from Lemma \ref{lem:col-swap} (2) that $\bfe_{\tbx}=\bfe_{\tbx}(\tbx_{1,j},\tbx_{1,j+1})$.  By Definition \ref{def:aST}, we have $\tbx\prec \tbx(\tbx_{1,j},\tbx_{1,j+1})\in \aST_n(k)$ and the claim follows.  

Suppose now that $j\leq k$. Consider now the subset
\[
\aST_{\tbx} := \Bigl\{ \mathtt{w} \in \aST_n(k) \Bigm| \{\mathtt{w}\}=\{\tbx\} \Bigr\} \subset \aST_n(k).
\]
Recall from Remark \ref{rmk:poset} (2) $\aST_{\tbx}$ is a totally ordered set under $\preceq$, so we can show (2) by induction on $\aST_{\tbx}$. Indeed, the maximum of $\aST_{\tbx}$ is the unique standard tableau $\mathtt{s}$ associated to $\{\tbx\}$ and there is nothing to show. Otherwise, it follows from Lemma \ref{lem:Garnir} that we can write $\bfe_{\tbx} = \bfe_{\mathtt{u}} - \bfe_{\mathtt{v}}$ with $\mathtt{u}, \mathtt{v}\succ \tbx$ and $\mathtt{u}\in\aST_{\tbx}$.  Now the claim follows by applying induction hypothesis on $\mathtt{u}$.
\end{proof}

\subsection{Straightening a $p$-bad tableau}\label{subsec:badtab}
For convenience, let us call such $\tbx$ a \emph{$p$-bad} (standard) tableau, and denote by
\[
\pbad_n(k) := \ST_n(k)\setminus \ST_n^p(k)
\]
the set of all $p$-bad tableaux.
The aim of this subsection is to setup the inductive argument (Lemma \ref{lem:badstd}) needed to show that the $\Image(\varphi^{p-1})$-cosets representative given by the $p$-standard polytabloids spans $H_k^{\slash 0}$.
Along the way, we will derive a $p$-standard version of Garnir relation that straighten a (polytabloid associated to a) $p$-bad tableau, in analogous to the straightening of non-standard polytabloid.

Let us start by introducing some combinatorial gadgets to aid exposition.  First note that there is a bilinear operation on  $\bigoplus_{k=0}^n\F\Omega_k$ given by bilinearly extending 
\[
	\nu\cdot \omega := \begin{cases} \nu\cup \omega, &\text{if } \nu\cap \omega=\emptyset; \\
	0, &\text{otherwise}
\end{cases}
\]
for $\nu\in\Omega_k$ and $\omega \in\Omega_\ell$.  

Another tool we need is the divided powers of $\varphi$, i.e. $\varphi^{(a)}:=(a!)^{-1}\varphi^a$ for $a\in\interval{0,p-1}$.  In other words, $\varphi_k^{(a)}$ sends a $k$-subset $\omega$ to all formal sum of $(k-a)$-subsets contained in $\omega$.  This map is called \emph{$a$-step boundary map} in \cite{Wil} and we will follow this convention.  The multi-step boundary maps satisfy the following (super-)Leibniz rule, which is called splitting lemma in \cite{Wil}.

\begin{lemma}{\rm \cite[Lemma 3.5]{Wil}, \cite[Lemma 2.1]{BJS}}\label{lem:splitting}
For $v\in \F\Omega_k$ and $w\in \F\Omega_\ell$ such that $\nu\cap\omega=\emptyset$ for any support $\nu$ of $v$ and any support $\omega$ of $w$, we have
\[
(v\cdot w)\varphi_{k+\ell}^{(t)} = \sum_{i=0}^t((v)\varphi_k^{(i)})\cdot((w)\varphi_\ell^{(t-i)}).
\]
\end{lemma}

\begin{remark}
Note that we could have defined $\varphi^{(a)}$ for all $a\geq 0$ if we replace the ground field $\F$ by $\Z$.  We suspect that $\varphi$ comes from (or defines) a `gaea structure', in the sense of \cite{EQ}, on the Schur algebra $S_{\Z}(2,n)$, i.e. the endomorphism algebra of the $\Z\mathfrak{S}_n$-module $\bigoplus_{k=0}^n \Z\Omega_k$.
\end{remark}

The following key lemma provides a relation between certain polytabloids and the map $\varphi$.

\begin{lemma}\label{lem:tbx-reln}
Suppose $\tbx$ is a $(n-k,k)$-tableau.
For an integer $i\in \interval{(n-k)-(p-2)}$, consider the subsets
\[
B:=\{\tbx_{1,i+j}\mid j\in\interval{0,p-2} \} \subset K:=B\cup\{\tbx_{2,i+j}\mid j\in\interval{0,\min\{p-2, k-i\}}\}.
\] 
Then the following equation holds.
\[
\bfe_\tbx \sum_{\tau\in \mathfrak{S}_B} \tau = (\bfe_{\tbx'}\cdot K)\varphi^{p-1},
\]
where $\tbx'$ is the tableau obtained from $\tbx$ by removing the boxes with entries in $K$ and attach them to the end of first row (see Figure \ref{fig:tbx}).
\end{lemma}
\begin{figure}[!hbtp]
\centering
\begin{tikzpicture}[scale=0.7]
\node at (-0.6,1) {$\mathtt{t}$\;\;:};
\draw  (0,1) rectangle (2,0)  (0,2) rectangle (5.5,1);
\filldraw [fill=lightgray]  (2,2) rectangle (4.5,1);\node at (3.3,1.5) {$B$};
\filldraw [fill=lightgray]  (2,1) rectangle (4,0);\node at (3,0.5) {$K\setminus B$};
\draw[latex-latex] (2,2.3) -- (4.5,2.3);\draw[dashed] (2,2) -- +(0,0.7) (4.5,2)--+(0,0.7);\draw (3.3,2.6) node {$p-1$};
\draw[latex-latex] (2,2.3) -- (0,2.3);\draw[dashed] (0,2) -- +(0,0.7);\draw (1,2.6) node {$i-1$};
\draw[latex-latex] (2,-0.25) -- (4,-0.25);\draw[dashed] (2,0) -- +(0,-0.7) (4,0)--+(0,-0.7);\draw (3,-0.5) node {\footnotesize $\ell$};

\node at (7,1) {$\leadsto\;\; \mathtt{t'}$\;:};
\draw (8,1) rectangle (10,0) (8,2) rectangle (11,1);
\filldraw [fill=lightgray]  (11,2) rectangle (13.5,1) (13.5,2) rectangle (15.5,1);
\node [fill=lightgray] at (13.5,1.5) {\footnotesize $K$};
\draw (10,1) -- (10,2);

\begin{scope}[shift={(0,-4)}]
\node at (-0.6,1) {$\mathtt{t}$\;\;:};
\draw  (0,1) rectangle (5,0) (0,2) rectangle (5.5,1);
\filldraw [fill=lightgray]  (2,2) rectangle (4.5,0);\node at (3.25,1.5) {$B$};\node at (3.25,0.5) {$K\setminus B$};
\draw (2,1) -- +(3,0);
\draw[latex-latex] (2,2.3) -- +(2.5,0);\draw[dashed] (2,2) -- +(0,0.7) (4.5,2)--+(0,0.7);\draw (3.3,2.6) node {\footnotesize $p-1(=\ell)$};
\draw[latex-latex] (2,2.3) -- (0,2.3);\draw[dashed] (0,2) -- +(0,0.7);\draw (1,2.6) node {$i-1$};

\node at (7,1) {$\leadsto\;\;\mathtt{t'}$\;:};
\draw (8,1) rectangle (9.5,0) (9.5,1) rectangle (10,0) (8,2) rectangle (10.5,1);
\filldraw [fill=lightgray]  (10.5,2) rectangle (13,1) (13,2) rectangle (15.5,1);
\node [fill=lightgray] at (13,1.5) {\footnotesize $K$};
\draw (9.5,1) -- (9.5,2);
\end{scope}
\end{tikzpicture}
\caption{Notations used in Lemma \ref{lem:tbx-reln}.}\label{fig:tbx}
\end{figure}
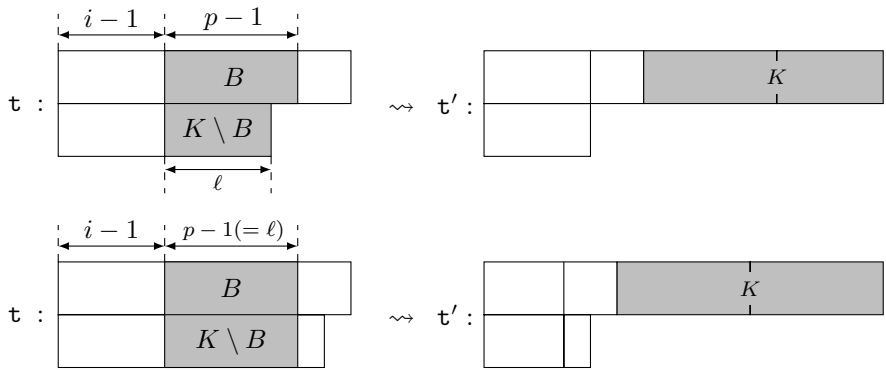
\begin{proof}

Let us first set up some additional notations.  Take $\ell := |K \cap \{\tbx\}|=\min\{p-2, k-i\}+1\in\interval{p-1}$.  Hence, $\tbx'$ is a $(n-(k-\ell), (k-\ell))$-tableau, c.f. Figure \ref{fig:tbx}.

We will prove the lemma by showing the following two equations:
\begin{enumerate}[(i)]
\item $\bfe_\tbx \sum_{\tau\in \mathfrak{S}_B} \tau = -\bfe_{\tbx'}\cdot(K)\varphi^{(p-1)}$.
\item $-\bfe_{\tbx'}\cdot(K)\varphi^{(p-1)} = (\bfe_{\tbx'}\cdot K)\varphi^{p-1}$.
\end{enumerate}

\underline{Proof of (i)}: 
For clarity, what we need to show is the equality
\begin{equation}\label{eq:countB}
\sum_{\tau\in \mathfrak{S}_B} \sum_{\sigma\in C_{\tbx}} \sgn(\sigma)\{\tbx\}\sigma\tau = -\sum_{\sigma' \in C_{\tbx'}} \sgn(\sigma')\big(\{\tbx'\}\sigma'\big) \cdot (K)\varphi_{\ell+p-1}^{(p-1)}.
\end{equation}

Consider a $k$-subset $\omega$ that is of the form $\{\tbx\}\sigma\tau$ with $\sigma\in C_\tbx$ and $\tau\in \mathfrak{S}_B$. Since $\{\tbx'\}\subset \{\tbx\}$, $\sigma$ induce a unique element $\sigma'\in C_{\tbx'}$ that fixes $\tbx'_{1,j}$ for all $j>k-\ell$ and
\[\omega = \{\tbx\}\sigma\tau = (\{\tbx'\}\sigma')\cdot (K\cap \omega).
\]

Moreover, we can observe that (c.f. Figure \ref{fig:tbx}) the $\ell$-subset $K\cap\omega$ must appear as a support of $(K)\varphi_{\ell+p-1}^{(p-1)}$.
Hence, $\omega$ must arise as a support in the right-hand side of \eqref{eq:countB}.
Note that, conversely, every support in the right-hand side can always be obtained in such a form.

Now we need to show that the coefficients $\lambda_\omega$ and $\mu_\omega$ of $\omega$ in the left-hand side and right-hand side of \eqref{eq:countB}, respectively, coincide.

The number of times that $\omega$ is picked up in the summation over $\mathfrak{S}_B$ is the order of the stabiliser subgroup of $\omega$ in $B$.
Since this group is just $\mathfrak{S}_{(\interval{n}\setminus\omega)\cap B} \times \mathfrak{S}_{\omega\cap B}$, the required number is $(p-1-r)!r!$, where $r=|\omega\cap B|$.
It follows from Wilson's theorem that $(p-1-r)!r!\equiv (-1)^{r+1}$ (mod $p$), so we have $\lambda_\omega = (-1)^{r+1}\sgn(\sigma)$.
Since $\sigma(\sigma')^{-1}$ acts only on columns in $\tbx$ with contents in $K$, we have
\begin{align*}
\lambda_\omega &= (-1)^{r+1}\sgn(\sigma)\\
&= (-1)^{r+1}\sgn(\sigma')\sgn(\sigma(\sigma')^{-1}) \\
&=  (-1)^{r+1}\sgn(\sigma')(-1)^{|\omega\cap B|} \;\;=\;\; (-1)^{2r+1}\sgn(\sigma') \;\; = \;\; -\sgn(\sigma')
\end{align*}

For the coefficient $\mu_\omega$, since the coefficient of every support of $(K)\varphi_{\ell+p-1}^{(p-1)}$ is $1$, combining with the fact that $\omega = (\{\tbx'\}\sigma')\cdot (K\cap\omega)$ yields $\mu_\omega = -\sgn(\sigma')$.
This finishes the proof of (i).

\underline{Proof of (ii)}:
It is clear from the construction that every $k$-subset in the support of $\bfe_{\tbx'}$ is disjoint from $K$, so it follows from Lemma \ref{lem:splitting} that 
\[
(\bfe_{\tbx'}\cdot K)\varphi^{(p-1)} = \sum_{i=0}^{p-1} (\bfe_{\tbx'})\varphi^{(i)}\cdot (K)\varphi^{(p-1-i)}.
\]
Since $\tbx'$ is a $(n-(k-\ell), k-\ell)$-tableau, it follows from Theorem \ref{thm:James} (2) that $\varphi^i$ annihilates $\bfe_{\tbx'}$ for any $i>0$.  This finishes the proof of (ii).

The required equality now follows from the observation that $\varphi^{(p-1)}=(p-1)!\varphi^{p-1}=-\varphi^{p-1}$, where the last equality follows from Wilson's theorem.
\end{proof}

Before proceeding to the next step, we need an elementary observation.

\begin{lemma}\label{lem:badpaths}
For a standard tableau $\tbx\in\ST_n(k)$, the following are equivalent
\begin{enumerate}[(1)]
\item $\tbx$ is $p$-bad.
\item There is a strictly descending chain $\tbx_{2,i} > \tbx_{1,i+p-2} > \tbx_{1,i+p-3} > \cdots > \tbx_{1,i}$ for some $i\in\interval{k}$.
\item $\tbx_{2,i}\geq p+2(i-1)$ for some $i\in \interval{k}$.
\end{enumerate}
\end{lemma}
\begin{proof}
\underline{(1) $\Leftrightarrow$ (2)}:  By definition.

\underline{(2) $\Rightarrow$ (3)}:  Since $\tbx$ is standard, the smallest possibly entry for $\tbx_{1,i+p-2}$ is $i+p-2$.  On the other hand, $\tbx_{2,i}$ must be larger than $\tbx_{2,j}$ for all $j<i$.  Hence $\tbx_{2,i} > (i+p-2)+(i-1) = 2(i-1)+p-1$.

\underline{(3) $\Rightarrow$ (1)}:  Strictness of $\tbx$ implies that $\tbx_{2,i}\geq 2i$, and so one needs to at least fill up $(2i-1)+(p-1)-2i-1 = p-2$ more boxes after the $i$-th box in the first row before filling $i$-th box in the second row by $\tbx_{2,i}$.  Hence, we have $\tbx_{2,i}>\tbx_{1,i+p-2}$ as required.

\end{proof}

Lemma \ref{lem:badpaths} allows us to introduce the following terminology.
\begin{definition}\label{def:Bi}
Let $\tbx\in\pbad_n(k)$ be a $p$-bad standard $(n-k,k)$-tableau.
We say that $\tbx_{2,i}$ is a \emph{bad entry} if $\tbx_{2,i}\geq p+2(i-1)$.
Since such an entry always exists by Lemma \ref{lem:badpaths}, we can  define the following $(p-1)$-subset of $\interval{n}$:
\[
B_i:=\{\tbx_{1,i+1} < \tbx_{1,i+2} < \cdots < \tbx_{1,i+p-1} < \tbx_{2,i}\}.
\]
\end{definition}

\begin{lemma}\label{lem:tbx-out}
Let $\tbx\in \pbad_n(k)$ be a $p$-bad standard tableau with a bad entry $\tbx_{2,i}$.
Then the element $\bfe_{\tbx}\sum_{\tau\in \mathfrak{S}_{B_i}} \tau \in S^{(n-k,k)}\subset \F\Omega_k$ belongs to the submodule $\Image(\varphi_{k+p-1}^{p-1})$.
\end{lemma}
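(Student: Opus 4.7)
The plan is to reduce to Lemma \ref{lem:tbx-reln} via a single column swap. I would first let $\rho := (\tbx_{1,i},\tbx_{2,i})$, which is a column transposition lying in $C_\tbx$, and set $\tilde{\tbx} := \tbx\rho$. A short sign computation using $\rho\kappa_\tbx = \sgn(\rho)\kappa_\tbx = -\kappa_\tbx$ together with $C_{\tilde{\tbx}} = C_\tbx$ (so that $\kappa_{\tilde{\tbx}} = \kappa_\tbx$) yields $\bfe_{\tilde{\tbx}} = \{\tbx\}\rho\kappa_\tbx = -\bfe_\tbx$. The swap moves $\tbx_{2,i}$ into the first-row position $(1,i)$ of $\tilde{\tbx}$, so the first-row entries of $\tilde{\tbx}$ at positions $i,i+1,\ldots,i+p-2$ are $\tbx_{2,i},\tbx_{1,i+1},\ldots,\tbx_{1,i+p-2}$, whose underlying set is exactly $B_i$.

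Next I would apply Lemma \ref{lem:tbx-reln} to $\tilde{\tbx}$ at index $i$, matching the lemma's $B$ with our $B_i$. This gives
\[
\bfe_{\tilde{\tbx}}\sum_{\tau\in\mathfrak{S}_{B_i}}\tau = (\bfe_{\tilde{\tbx}'}\cdot K)\,\varphi_{k+p-1}^{p-1},
\]
with $\tilde{\tbx}'$ and $K$ produced by removing the $p-1$ columns $i,\ldots,i+p-2$ from $\tilde{\tbx}$. Combining with $\bfe_{\tilde{\tbx}} = -\bfe_\tbx$ then yields
\[
\bfe_\tbx\sum_{\tau\in\mathfrak{S}_{B_i}}\tau = -(\bfe_{\tilde{\tbx}'}\cdot K)\,\varphi_{k+p-1}^{p-1} \in \Image(\varphi_{k+p-1}^{p-1}),
\]
which is the desired statement.

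The main obstacle will be the hypothesis $i\in\interval{k-(p-2)}$ of Lemma \ref{lem:tbx-reln}: this forces all $p-1$ removed columns to be two-row, whereas the bad-entry condition (via Lemma \ref{lem:badpaths}) only guarantees $i+p-2\leq n-k$, so some of the columns $i,\ldots,i+p-2$ of $\tilde{\tbx}$ may be one-row when $k$ is close to $n/2$ and $i$ is close to $k$. I expect this to be a mild obstruction rather than a serious one: part~(i) of the proof of Lemma \ref{lem:tbx-reln} counts multiplicities via Wilson's theorem using only column transpositions (to which one-row columns contribute trivially), and part~(ii) relies on the vanishing of $\varphi$ on $\bfe_{\tilde{\tbx}'}\in S^{k-\ell,k-\ell}$, both of which remain valid when $\ell<p-1$ (the shape of $\tilde{\tbx}'$ simply becomes $(n-k-(p-1),k-\ell)$ with a smaller second row). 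Extending the bookkeeping of that proof to the range $\ell\in\interval{0,p-1}$ is the main technical task, after which the plan above applies verbatim to every bad entry.
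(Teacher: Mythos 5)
Your proposal is correct and is essentially the paper's own argument: the paper conjugates $\mathfrak{S}_{B_i}$ by the column transposition $(\tbx_{1,i},\tbx_{2,i})$ to obtain $\mathfrak{S}_B$ with $B$ the first-row entries of columns $i,\ldots,i+p-2$ and uses $\bfe_\tbx(\tbx_{1,i},\tbx_{2,i})=-\bfe_\tbx$ before invoking Lemma \ref{lem:tbx-reln}, which is the same computation as your transposing the tableau to $\tilde{\tbx}$ and applying Lemma \ref{lem:tbx-reln} there. The obstacle you flag is not specific to your route: the paper applies Lemma \ref{lem:tbx-reln} in exactly the same range (only $i+p-2\leq n-k$ is guaranteed by badness), and the proof of that lemma already carries out its bookkeeping with $\ell=|K\cap\{\tbx\}|$ allowed to be smaller than $p-1$, i.e.\ with some removed columns being one-row.
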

\begin{proof}
Take $B:=B_i(\tbx_{1,i},\tbx_{2,i})$, then we have $(\tbx_{1,i},\tbx_{2,i})\mathfrak{S}_{B_i}(\tbx_{1,i},\tbx_{2,i}) = \mathfrak{S}_B$.
This implies that
\begin{align*}
\bfe_{\tbx}\sum_{\tau\in\mathfrak{S}_{B_i}}\tau  &=  \Big(\bfe_{\tbx}(\tbx_{1,i},\tbx_{2,i})\Big) \sum_{\sigma \in \mathfrak{S}_B} \sigma (\tbx_{1,i},\tbx_{2,i}) = -\left(\bfe_{\tbx}\sum_{\sigma \in \mathfrak{S}_B} \sigma\right) (\tbx_{1,i},\tbx_{2,i}),
\end{align*}
where the last equality follows from the observation that $\bfe_\tbx = -\bfe_\tbx(\tbx_{1,i},\tbx_{2,i})$.
On the other hand, it follows from Lemma \ref{lem:tbx-reln} that $\bfe_{\tbx}\sum_{\sigma\in\mathfrak{S}_B}\sigma \in \Image(\varphi^{p-1})$.
Hence, the element on the right-hand side is also in $\Image(\varphi^{p-1})$, and so is $\bfe_\tbx\sum_{\tau\in \mathfrak{S}_{B_i}} \tau$.
\end{proof}

This Lemma \ref{lem:tbx-out} can be viewed as the Garnir relation in the context of straightening a $p$-bad standard tableau to a $p$-standard (c.f. the first line of the proof of Lemma \ref{lem:badstd} versus the first line of the proof of Lemma \ref{lem:Garnir}).

\begin{example}
Consider the standard tableau
\begin{center}\begin{tikzpicture}[scale=0.4]
		\node at (-1,-.7) {$\tbx =$};
		\drawtbx{{1,2,5},{3,4}}{(0,0)}
\end{tikzpicture}.\end{center}
Observe that $\tbx$ is $3$-bad with bad entry $\tbx_{2,1}$. Following the notation of Lemma \ref{lem:tbx-out}, we have $B_i = B_1 = \{ 2,3\}$, and $B=B_1(1,3)=\{1,2\}$.  Calculating $\bfe_\tbx\sum_{\tau\in \mathfrak{S}_{\{2,3\}}}\tau$ explicitly yields
\begin{align*}
\bfe_\tbx\sum_{\tau\in \mathfrak{S}_{\{2,3\}}}\tau &= \bfe_\tbx + \bfe_\tbx(2,3) \\
&= \Big(\{3,4\}-\{1,4\}-\{2,3\}+\{1,2\}\Big) + \Big(\{2,4\}-\{1,4\}-\{2,3\}+\{1,3\}\Big) \\
&= \{1,2\}+\{1,3\}+\{2,4\}+\{3,4\} - 2\Big( \{1,4\}+\{2,3\}\Big)\\
&= (\{1,2,3,4\})\varphi^{(2)}\\
&= (2!)^{-1}(\{1,2,3,4\})\varphi^2 = -(\{1,2,3,4\})\varphi^2.
\end{align*}
Note that the third and last equality follows from $2!=2=-1$ in a field of characteristic 3, c.f. Proof of Claim (i) in the proof of Lemma \ref{lem:tbx-reln}.

This is a small example where we can spot the fourth equality quite easily.  The `correct' way to see why $\{1,2,3,4\}$ appears is to follow the proof, which says that it arises from the formula $\bfe_\tbx \sum_{\sigma\in \mathfrak{S}_{B}}\sigma = (\bfe_{\tbx'}\cdot K)\varphi^{p-1}$ of Lemma \ref{lem:tbx-reln}.  Since we have $B=B_1(1,3)=\{1,2\}\subset K=\{1,2,3,4\}$ and $\tbx'$ being a tableau of shape $(5,0)$, we have $\bfe_{\tbx'}=\{\tbx'\}=\emptyset$ and $\bfe_{\tbx'}\cdot K = \{1,2,3,4\}$, as required.
\end{example}

\begin{lemma}\label{lem:badpaths2}
Let $\tbx\in \pbad_n(k)$ be a $p$-bad standard tableau with a bad entry $\tbx_{2,i}$.
Then $\tbx\sigma$ is column-strict for any $\sigma\in B_i$.
In particular, if $(\tbx_{2,i})\sigma=\tbx_{2,i}$, then $\tbx\sigma$ is almost standard.
\end{lemma}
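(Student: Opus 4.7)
The plan is to do a direct column-by-column verification after unpacking how $\sigma\in\mathfrak{S}_{B_i}$ acts on $\tbx$. Recall that
\[
B_i=\{\tbx_{1,i+1}<\tbx_{1,i+2}<\cdots<\tbx_{1,i+p-1}<\tbx_{2,i}\},
\]
so the entries of $\tbx$ that $\sigma$ can move are exactly those in positions $(1,i+1),(1,i+2),\ldots,(1,i+p-1)$ and $(2,i)$. Thus the only columns in which $\tbx$ and $\tbx\sigma$ can differ are columns $i,i+1,\ldots,i+p-1$, and outside this range column-standardness (and row-standardness of the second row) is inherited from $\tbx$ being standard.

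For column-standardness of $\tbx\sigma$ I would check the affected columns directly. In column $i$ the top entry $\tbx_{1,i}$ is unchanged and the bottom entry is $(\tbx_{2,i})\sigma\in B_i$; since $\tbx$ is standard, every element of $B_i$ is at least $\tbx_{1,i+1}>\tbx_{1,i}$, so the column is standard. For $j\in\{i+1,\ldots,i+p-1\}$ with $j\leq k$, the top entry is $(\tbx_{1,j})\sigma\in B_i$ and the bottom entry is the unchanged $\tbx_{2,j}$; using $\max B_i=\tbx_{2,i}$ and the fact that $\tbx$ standard forces $\tbx_{2,j}>\tbx_{2,i}$ for $j>i$, we get $(\tbx_{1,j})\sigma\leq\tbx_{2,i}<\tbx_{2,j}$. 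For $j\in\{i+1,\ldots,i+p-1\}$ with $j>k$ there is no second-row entry, so nothing needs checking; note that Lemma \ref{lem:badpaths} guarantees $i+p-2\leq n-k$, so these columns genuinely exist in the first row.

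For the second assertion, if $(\tbx_{2,i})\sigma=\tbx_{2,i}$ then $\sigma$ permutes only the first-row entries of $B_i$, so the second row of $\tbx\sigma$ is identical to that of $\tbx$ and is therefore strictly increasing; combined with column-standardness from the previous step, $\tbx\sigma$ is almost standard in the sense of Definition \ref{def:aST}.

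There is no real obstacle here: the statement reduces to bookkeeping, and the only subtlety is to remember that among the columns indexed $i+1,\ldots,i+p-1$ some may fall strictly beyond column $k$ (where the comparison with a second-row entry is vacuous), while the remaining ones automatically satisfy the required inequality because $\max B_i=\tbx_{2,i}$ is itself smaller than any $\tbx_{2,j}$ with $j>i$.
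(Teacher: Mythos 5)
Your proof is correct and follows essentially the same route as the paper's: a column-by-column check using that $\min B_i=\tbx_{1,i+1}>\tbx_{1,i}$ handles column $i$, that $\max B_i=\tbx_{2,i}<\tbx_{2,j}$ for $j>i$ handles the remaining affected columns, and that $\sigma$ fixing $\tbx_{2,i}$ leaves the second row untouched for the ``almost standard'' conclusion. The only discrepancy is that you carry the range $i+1,\ldots,i+p-1$ from Definition \ref{def:Bi} (which appears to be an off-by-one slip in the paper, since $B_i$ should be a $(p-1)$-subset with first-row entries up to $\tbx_{1,i+p-2}$, matching the chain in Lemma \ref{lem:badpaths} and the bound $i+p-2\leq n-k$), but this does not affect the validity of your argument.
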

\begin{proof} 
Column-strictness follows from the existence of the chain in Lemma \ref{lem:badpaths}.
More precisely, for $j \in \interval{p-2}$ with $i+j\leq k$, we have $(\tbx\sigma)_{2,i+j}=\tbx_{2,i+j}>\tbx_{2,i}\geq \tbx_{1,j+l}$ for all $l\in\interval{p-2}$ and so $\tbx_{2,i+j}>(\tbx\sigma)_{1,i+j}$ always.
For the $i$-th column, again the chain in Lemma \ref{lem:badpaths} tells us that $(\tbx_{2,i})\sigma\geq\tbx_{1,i+1}>\tbx_{1,i}=(\tbx\sigma)_{1,i}$.
The last statement follows from the fact that $\tbx$ is standard and the assumption that $\sigma$ fixes all entries in the second row.
\end{proof}

Recall that
\[\ [v]:=v+\Image(\varphi^{p-1}) \in H_k^{/0}\]
denotes the coset containing $v\in \Ker(\varphi_k)$ in the quotient $H_k^{/0}$.

\begin{lemma}\label{lem:badstd}
$[\bfe_{\tbx}] \in \Fspan\{[\bfe_{\tbx'}]\mid \tbx\prec\tbx'\in\aST_n(k)\}\subset H_k^{/0}$ for any $\tbx\in \pbad_n(k)$.
\end{lemma}
\begin{proof}
By Lemma \ref{lem:tbx-out}, if $\tbx_{2,i}$ is a bad entry (which always exists by Lemma \ref{lem:badpaths}), we have 
\[
-[\bfe_\tbx]=\sum_{1\neq \sigma\in \mathfrak{S}_{B_i}} [\bfe_{\tbx}\sigma]
\]
in $H_k^{/0}$.
So it suffices to show that $[\bfe_\tbx\sigma]$ with non-identity $\sigma\in \mathfrak{S}_{B_i}$ is in $\Fspan\{\bfe_{\tbx'}\mid \tbx\prec\tbx'\in\aST_n(k)\}$.

We partition $\mathfrak{S}_{B_i}\setminus\{1\}$ into two disjoint subsets $S\sqcup T$ so that $S$ consists of all the non-identity permutations that fix $\tbx_{2,i}$.
By Lemma \ref{lem:nonstd} (2), for any $\sigma\in S$, we have
\[
\bfe_{\tbx}\sigma \in \bfe_{\tbx} + \Fspan\{ \tbx' \in \aST_n(k) \mid \tbx\prec\tbx'\in\aST_n(k) \}.
\]
Therefore, we have $\sum_{\sigma\in S} \bfe_\tbx \sigma = |S|\bfe_\tbx + v$ for some $v\in \Fspan\{\bfe_{\tbx'}\mid \tbx'\succ\tbx\}$.
Note that the subgroup $\{1\}\sqcup S$ of $\mathfrak{S}_{B_i}$ that stabilises $\tbx_{2,i}$ is of order $(p-2)!$, which is congruent to $1$ mod $p$ by Wilson's theorem.
This means that $|S|$ is congruent to $0$ mod $p$, which yields
\[
-[\bfe_\tbx] = (|S|[\bfe_\tbx]+[v])+\sum_{\sigma \in T}[\bfe_{\tbx}\sigma] =  [v]+\sum_{\sigma \in T}[\bfe_{\tbx}\sigma].
\]

Now consider $\sigma\in T\subset \mathfrak{S}_{B_i}$.  
It follows from Lemma \ref{lem:badpaths2} that $\tbx\sigma$ is column-strict.
By Lemma \ref{lem:col-swap}, we can apply a column reordering permutation $\rho$ so that $\tbx\sigma\rho$ is almost standard and $\bfe_\tbx\sigma=\bfe_\tbx\sigma\rho$.
Since
\[
\{\tbx \sigma\rho\}=\{\tbx\sigma\} = \big(\{\tbx\}\setminus\{\tbx_{2,i}\}\big)\sqcup\{(\tbx_{2,i})\sigma\},
\]
and Lemma \ref{lem:badpaths} says that $(\tbx_{2,i})\sigma< \tbx_{2,i}$, applying Lemma \ref{lem:bad-swap} yields $\tbx\sigma\rho\succ \tbx$.

Thus, we have $\sum_{\sigma\in T}[\bfe_\tbx\sigma]\in \Fspan\{\tbx'\mid \tbx\prec\tbx'\in\aST_n(k)\}$, and the proof is now complete.
\end{proof}

\medskip

All the required ingredients for proving Theorem \ref{thm:basis} are now ready.

\subsection{Proof of Theorem \ref{thm:basis}}\label{subsec:basisproof}

Recall from Theorem \ref{thm:newfinal} that $H_k^{\slash 0}\cong D^{(n-k,k)}$.  Since this is the unique composition factor of $\F\Omega_k$ appearing as the top of the submodule $S^{(n-k,k)}$ by Theorem \ref{thm:James} (1) and (3), we only need to find the minimal spanning set of $H_k^{\slash 0}$ contained in $\mathcal{B}:=\{[\bfe_\tbx]\mid \tbx\in\ST_n(k)\}$.

Before proceeding, we remark that $\Ker(\varphi_k)$ is in general larger than $S^{(n-k,k)}$.  In fact, \cite[Theorem 5.3]{BJS} shows that $\Ker(\varphi_k)$ is generated (as $\F\mathfrak{S}_n$-module) by $\Image(\varphi^{p-1})$ along with any (single) polytabloid.

To prove that any $[\bfe_\tbx]\in \mathcal{B}$ is in the $\Fspan$ of $\{[\bfe_{\tbx'}] \mid \tbx'\in \ST_n^p(k)\}$, it is enough to replace $\mathcal{B}$ by the following larger set.

\begin{lemma}
For any almost standard tableau $\tbx\in \aST_n(k)$, we have
\[
\ [\bfe_{\tbx}]:=\bfe_{\tbx}+\Image(\varphi^{p-1}) \;\;\in \;\; \Fspan\{[\bfe_{\tbx'}]\mid \tbx' \text{ is $p$-standard }\}.
\]
\end{lemma}
\begin{proof}
We will show this by induction on $(\aST_n(k),\preceq)$.
Note that a maximal (in fact, the maximum) element of this poset is the standard tableau $\tbx^\omega$ associated to $\omega=\{2, 4, \ldots,2k\}$, which is always $p$-standard unless $p = 2$.  Clearly, there is nothing to show in the case when $\tbx=\tbx^\omega$.

Consider now $\tbx\prec\tbx^\omega$.
If $\tbx$ is a $p$-standard tableau, then there is nothing to show; otherwise, $\tbx$ is either $p$-bad or non-standard.

If $\tbx$ is $p$-bad, then it follows from Lemma \ref{lem:badstd} that $[\bfe_\tbx]\in \Fspan\{[\bfe_{\tbx'}] \mid \tbx \prec \tbx'\in\aST_n(k)\}$.

Consider the case when $\tbx$ is non-standard.  Since by definition the unique standard tableau $\mathtt{s}$ with $\{\mathtt{s}\}=\{\tbx\}$ satisfies $\mathtt{s}\succ \tbx$, Lemma \ref{lem:nonstd} implies that $[\bfe_\tbx]\in \Fspan\{[\bfe_{\tbx'}]\mid \tbx\prec\tbx'\in\aST_n(k)\}$.
By the induction hypothesis, every $[\bfe_{\tbx'}]$ with $\tbx'\succ\tbx$ are in $\Fspan\{ [\bfe_{\mathtt{u}}] \mid \mathtt{u}\in \ST_n^p(k)\}$, so it follows immediately that so is $[\bfe_\tbx]$.
\end{proof}

Recall from Proposition \ref{prop:dimD} that $D^{(n-k,k)}\cong H_k^{\slash 0}$ has dimension given by the number of $p$-standard tableaux, so $\Fspan\{[\bfe_\tbx]\mid \tbx\in\ST_n^p(k)\}$ is the minimal spanning set for $H_k^{\slash 0}$, and hence a basis.

Finally, we note that $D^{(n-k,k)}$ is the irreducible top of $S^{(n-,k)}$ (Theorem \ref{thm:James} (1)) and so the description of basis for $H_k^{\slash 0}$ translates to that of $D^{(n-k,k)}$ as claimed.

The proof of Theorem \ref{thm:basis} is now complete.

\end{document}